\newtheorem{Theorem}[equation]{Theorem}
\newtheorem{Corollary}[equation]{Corollary}
\newtheorem{Lemma}[equation]{Lemma}
\newtheorem{Proposition}[equation]{Proposition}
\theoremstyle{definition}
\newtheorem{Definition}[equation]{Definition}
\newtheorem{Example}[equation]{Example}
\theoremstyle{remark}
\newtheorem{Remark}[equation]{Remark}
\numberwithin{equation}{section}
\numberwithin{figure}{section}
\newcommand{\C}{{\mathbb C}}
\newcommand{\Z}{{\mathbb Z}}
\newcommand{\Q}{{\mathbb Q}}
\newcommand{\R}{{\mathbb R}}
\newcommand{\N}{{\mathbb N}}
\newcommand{\mc}[1]{\mathcal{#1}}
\newcommand{\mt}[1]{\text{#1}}
\newcommand{\md}[1]{\mathds{#1}}
\begin{document}

\title{A representation on the labeled rooted forests}
\author{Mahir Bilen Can}

\normalsize

\date{April 4, 2017}
\maketitle

\begin{abstract}
We consider conjugation action of symmetric group on the semigroup of all partial functions and 
develop a machinery to investigate character formulas and multiplicities. In particular, we determine
nilpotent matrices whose orbit under symmetric group afford the sign representation. 
Applications to rook theory are offered.

\vspace{.4cm}
\noindent 
\textbf{Keywords:} Nilpotent partial transformations, labeled rooted trees, symmetric group, plethysm.\\ 
\noindent 
\textbf{MSC:} 05E10, 20C30, 16W22
\end{abstract}

\section{Introduction}\label{S:Introduction}

Our goal in this paper is to contribute to the general field of combinatorial representation theory 
by using some ideas from semigroup theory, rook theory, as well as graph theory. 
Classical rook theory is concerned with the enumerative properties of file and rook numbers in 
relation with other objects of mathematics~\cite{BCHR}. In particular, enumeration of functions satisfying 
various constraints falls into the scope of rook theory. Here, we focus on {\em partial functions},
also known as partial transformations, on $[n]:=\{1,\dots, n\}$ with the property that the associated 
graph of the function is a labeled rooted forest.

There is an obvious associative product on the set of all partial transformations on $[n]$; the composition
$f\circ g$ of two partial transformations $f$ and $g$ is defined when the domain of $f$ intersects the 
range of $g$. The underlying semigroup, denoted by $\mc{P}_n$ is called the partial transformation 
semigroup~\cite{GanyushkinMazorchuk}. Of course, the identity map on $[n]$ is a partial transformation 
whence $\mc{P}_n$ is a monoid. Moreover, the subset $\mc{R}_n \subset \mc{P}_n$ consisting of 
injective partial transformations forms a submonoid of $\mc{P}_n$. $\mc{R}_n$ is known as (among combinatorialists)
the rook monoid since its elements have interpretations as non-attacking rook placements on the ``chessboard''  
$[n]\times [n]$ (see~\cite{GR86, KaplanskyRiordan}). It has a central place in the structure theory of reductive 
algebraic monoids. See~\cite{Putcha88,Renner05}.


The symmetric group $S_n$ is the group of invertible elements in both of the monoids $\mc{P}_n$ and $\mc{R}_n$.
In this work, we compute the decompositions of certain representation $S_n$ on ``nilpotent partial transformations.''
Since there is no obvious 0 element in $\mc{P}_n$, we explain this using a larger monoid. 
Let $\mc{T}_n$ denote the {\em full transformation semigroup} which consists of all maps from $\{0,1,\dots, n\}$
into $\{0,1,\dots,n\}$. Clearly, the partial transformation semigroup $\mc{P}_n$ is canonically isomorphic to the 
subsemigroup $\mc{P}_n^* \subset \mc{T}_n$ consisting of elements $\alpha : \{0,1,\dots, n\}\rightarrow \{0,1,\dots, n\}$
such that $\alpha(0)=0$. An element $g\in \mc{T}_n$ is called nilpotent if there exists a sufficiently large $k\in \N$ such that 
for any $x\in \{0,1\dots, n\}$, $g^k (x) = g ( g ( \cdots (x) \cdots )) = 0$. An element $f$ of $\mc{P}$ is called nilpotent 
if, under the canonical identification $f \mapsto g=g_f$ of $\mc{P}$ with $\mc{P}^*$, the corresponding (full) transformation 
$g$ is nilpotent.  

The sets of nilpotent elements of $\mc{P}_n$ and $\mc{R}_n$ are denoted by $\mt{Nil}(\mc{P}_n)$ and $\mt{Nil}(\mc{R}_n)$,
respectively.  There is a beautiful way of representing, in terms of graphs, of the elements of these sets of nilpotent 
transformations. To build up to it, we first mention some useful alternative ways of representing elements of $\mc{P}_n$.

Recall that a partial transformation is a function $f :A \rightarrow [n]$ that is defined on a subset $A$ of $[n]$. 
We write the data of $f$ as a sequence $f=[f_1,f_2,\dots, f_n]$, where $f_i = f(i)$ if $i\in A$, and $f_i = 0$ otherwise. 
Equivalently, $f$ is given by the matrix $f=(f_{i,j})_{i,j=1}^n$ defined by 
$$
f_{i,j} = 
\begin{cases}
1 & \text{ if } i\in A, \\
0 & \text{ otherwise.}
\end{cases}
$$
Conveniently, in the matrix notation, the composition operation on partial functions transfers to the matrix multiplication.

Finally, a more combinatorial way of representing $f\in \mc{P}_n$ is described as follows. 
Starting with $n$ labeled vertices (labeled by the elements of $[n]$), if $f(i)=j$, then we connect the vertex with label $i$ by an
outgoing directed edge to the vertex with label $j$. The resulting graph is called the digraph of the partial transformation $f$.  
In Figure~\ref{F:a file placement} we depict three different representations, including the digraph, of the partial transformation 
$f=[3,3,5,0,5,0,1]$.
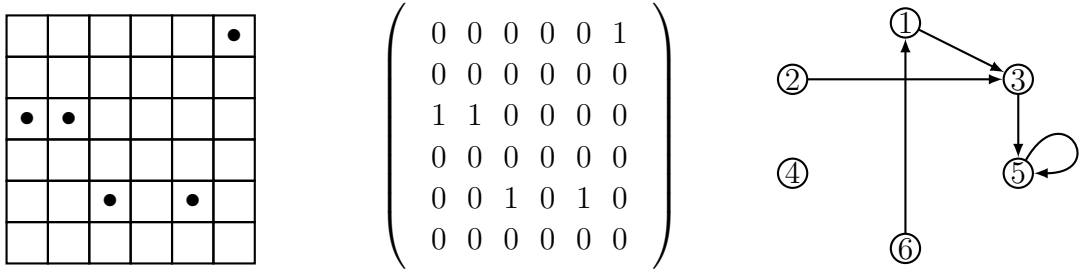
\begin{figure}[htp]
\centering
\begin{tikzpicture}[cap=round,>=latex]

\begin{scope}[xshift=-7.5cm, yshift=-2.25cm,scale=.55]
\node at (1.5,4.5) {$\bullet$};
\node at (2.5,4.5) {$\bullet$};
\node at (3.5,2.5) {$\bullet$};
\node at (5.5,2.5) {$\bullet$};
\node at (6.5,6.5) {$\bullet$};
\foreach \x in {1,...,6}
{\draw[ thick] (\x,6) rectangle +(1,1);}
\foreach \x in {1,...,6}
{\draw[ thick] (\x,5) rectangle +(1,1);}
\foreach \x in {1,...,6}
{\draw[ thick] (\x,4) rectangle +(1,1);}
\foreach \x in {1,...,6}
{\draw[ thick] (\x,3) rectangle +(1,1);}
\foreach \x in {1,...,6}
{\draw[ thick] (\x,2) rectangle +(1,1);}
\foreach \x in {1,...,6}
{\draw[ thick] (\x,1) rectangle +(1,1);}
\end{scope}

\begin{scope}
\matrix [matrix of math nodes,left delimiter=(, right delimiter=) ]
  {
\node {0}; & \node{0}; & \node {0}; &\node {0}; & \node{0}; & \node {1}; \\
\node {0}; & \node{0}; & \node {0}; &\node {0}; & \node{0}; & \node {0}; \\
\node {1}; & \node{1}; & \node {0}; &\node {0}; & \node{0}; & \node {0}; \\
\node {0}; & \node{0}; & \node {0}; &\node {0}; & \node{0}; & \node {0}; \\
\node {0}; & \node{0}; & \node {1}; &\node {0}; & \node{1}; & \node {0}; \\
\node {0}; & \node{0}; & \node {0}; &\node {0}; & \node{0}; & \node {0}; \\
  };
\end{scope}

\begin{scope}[xshift=5cm,-> , thick,loop/.style={min distance=10mm,in=0,out=60,looseness=10}]
 \node[inner sep =0.5, circle,draw] (1) at (0,1.5) {1};
 \node[inner sep =0.5, circle,draw] (2) at (-1.5,.75) {2};
 \node[inner sep =0.5, circle,draw] (3) at (1.5,.75) {3};
 \node[inner sep =0.5, circle,draw] (4) at (-1.5,-.5) {4};
 \node[inner sep =0.5, circle,draw] (5) at (1.5,-.5) {5};
 \node[inner sep =0.5, circle,draw] (6) at (0,-1.5) {6};
\draw[->,thick] (1) to (3);
\draw[->,thick] (2) to (3);
\draw[->,thick] (3) to (5);
\draw[->,thick] (6) to (1);
\draw[->,thick,loop below] (5) to (5);
\end{scope}

\end{tikzpicture}
\caption{Different presentations of the partial transformation $f=[3,3,5,0,5,0,1]$.}
\label{F:a file placement}
\end{figure}

The use of digraphs in rook theory goes back to Gessel's creative work~\cite{Gessel}. This approach is taken much 
afar by Haglund in~\cite{Haglund96} and Butler in~\cite{Butler}. As far as we are aware of, the nilpotent rook placements 
in combinatorics made its first appearance in Stembridge and Stanley's influential work~\cite{StembridgeStanley} on 
the immanants of Jacobi-Trudi matrices, where, essentially, the authors consider only those rook placements fitting 
into a staircase shape board. However, Stembridge and Stanley do not pursue the representation theoretic properties 
as we do here.

Following the terminology of~\cite{BCHR}, we call $f\in \mc{P}_n$ a $k$-file placement if the rank of its matrix 
representation is $k$, and similarly, we call $f\in \mc{R}_n$ a $k$-rook placement if the rank of its matrix 
representation is $k$. The basic observation that our paper builds on is that the symmetric group acts on nilpotent $k$-file 
placements as well as on the nilpotent $k$-rook placements. In fact, there is a more general statement from semigroup theory: 
the unit group of a monoid (with 0) acts on the set of nilpotent elements of the semigroup. 
Here we focus on the partial transformation monoid and the rook monoid. The main reason for confining ourselves 
only to these two special semigroups is twofold. First of all, the resulting objects from our investigations, namely, 
the labeled rooted forests, has significance not only in modern algebraic combinatorics 
(\cite{GarsiaHaiman96,HHLR,HaglundBook,Haiman03}, see~\cite{AkerCan12}, also) but also in the theory of 
classical transformation semigroups (see \cite{GanyushkinMazorchuk}). Secondly, much studied but still mysterious 
plethysm operation (\cite{Littlewood44,Howe88}) from representation theory has a very concrete combinatorial 
appearance in our work. Furthermore, rooted trees and forests play a very important role for computer science 
(see~\cite{Knuth3}). There are plenty of other reasons to focus on these objects for statistical and probabilistic purposes.

Now we are ready to give a brief overview of our paper and state our main results. In Section~\ref{S:Preliminaries}, we 
introduce the necessary notation and state some of the results that we use in the sequel. The purpose of Section~\ref{S:recounting} 
is to give a count of the number of file placements (partial transformations) according to the sizes of their domains. It turns out this 
count is the same as that of the ``labeled rooted forests.'' We prove in our Theorem~\ref{T:number of k files} that the number of 
nilpotent $k$-file placements is equal to ${n-1 \choose k} n^{k}$. This numerology is the first step towards understanding the 
$S_n$-module structure on the set of all nilpotent file placements. Towards this goal, we devote whole Section~\ref{S:case study}
to study $n=3$ case.

At the beginning of Section~\ref{S:induced representation} we show that the conjugation action on a nilpotent file placement 
does not alter the underlying unlabeled rooted forest. Moreover, we observe that the action is transitive on the labels. 
Let $\sigma$ be a labeled rooted forest. We denote the resulting representation, that is to say, the orbit of $\sigma$ under 
the conjugation action, by $o(\sigma)$ and call it the odun of $\sigma$. We observe in Theorem~\ref{T:add a vertex} that the 
representation $o=o(\sigma)$ is related by a simple operation to $o(\tau)$ if $\tau$ is the labeled rooted forest that is obtained 
from $\sigma$ by removing the root. As a simple consequence of this fact, we obtain our first recursive relation among the 
characters of oduns of forests. Another important observation we make in the same sections is that the adding of $k$ new isolated 
vertices to a given rooted forest corresponds to tensoring the original representation by the $k$-dimensional standard representation 
of $S_k$. This is our Theorem~\ref{T:add a regular representation}.
There is even more general statement that we record in Remark~\ref{R:more general}: 
If the rooted forest $\sigma$ is written as a disjoint union $\tau \cup \nu$ of two other rooted forests which do not have any identical  
rooted subtrees, then $o(\sigma)=o(\tau) \otimes o(\nu)$.

In Section~\ref{S:Plethysm} we prove our master plethysm result, Theorem~\ref{T:master plethysm theorem}, 
which states that if a rooted forest $\tau$ (on $mk$ vertices) is comprised of $k$ copies of the same rooted tree $\sigma$ 
(on $m$ vertices), then the odun of $\tau$ is given by the compositional product of the standard $k$-dimensional 
representation of $S_k$ with $o(\sigma)$. Combined with Remark~\ref{R:more general} this result gives us a satisfactorily 
complete description of the rooted forest representations. 

The main purpose of Section~\ref{S:Sign} is to determine when sign representation occurs in a given odun. 
The surprising combinatorial result of this section states that the sign representation occurs in $o(\sigma)$ 
if and only if $\sigma$ is ``blossoming.'' A rooted forest is called {\em blossoming} if it has no rooted subtree having (at least)
two identical maximal terminal branches of odd length emanating from the same vertex. 
By counting blossoming forests, we show that the total occurrence of sign representation in all rooted forest representations 
(on $n$ vertices) is equal to $2^{n-2}$.

The labeled rooted forest associated with a non-attacking rook placement has a distinguishing feature; it is a union of chains.
From representation theory point of view the ordering of the chains does not matter, therefore, there is a correspondence 
between the number of rooted forest representations (of non-attacking rooks) and partitions. In Section~\ref{S:Rooks} we 
make this precise. These results are simple applications of the previous sections.

In our ``Final Remarks'' section we give a formula for the dimension of a forest representation and compare our result with 
Knuth's hook-length formula. Finally, we close our paper in Section~\ref{S:Tables} by presenting tables of irreducible constituents 
of the nilpotent $k$ file placements.

\vspace{.5cm}

\textbf{Acknowledgements.}
We thank Michael Joyce, Brian Miceli, Jeff Remmel, and Lex Renner.

\section{Preliminaries}\label{S:Preliminaries}

\subsection{Terminology of forests}

A {\em rooted tree} $\sigma$ is a finite collection of vertices such that there exists a designated vertex, called the root
(or the {\em ancestor}), and the remaining vertices are partitioned into a finite set of disjoint non-empty subsets 
$\sigma_1,\dots, \sigma_m$, each of which is a tree itself. We depict a tree by putting its root at the top so that the 
following terminology is logical: if a vertex $a$ is connected by an edge to another vertex $b$ that is directly above $a$, 
then $a$ is called a {\em child} of $b$. Any collection of rooted trees is called a rooted forest. In particular, a tree is a forest. 
The elements of the set $[n]=\{1,\dots, n\}$ are often referred to as {\em labels}.
Cayley's theorem states that there are $n^{n-1}$ labeled rooted trees on $n$ vertices. 
Since adding a new vertex to a forest on $n$ vertices results in a tree on $n+1$ vertices (by connecting
the roots to the new vertex), and vice versa, 
Cayley's theorem is equivalent to the statement that there are $(n+1)^{n-1}$ labeled rooted forests on $n$ vertices.

\subsection{Basic character theory}

It is well-known that the irreducible representations of $S_n$ are indexed by partitions of $n$.
If $\lambda$ is a partition of $n$ (so we write $\lambda \vdash n$), 
then the corresponding irreducible representation is denoted by $V_\lambda$.

Let $R^0$ denote the ring of integers $\Z$ and let $R^n$, $n=1,2,\dots$ denote the $\Z$-module spanned by irreducible 
characters of $S_n$. We set $R=\bigoplus_{n\geq 0} R^n$. In a similar fashion, let $\Lambda$ denote the direct sum 
$\bigoplus_{n\geq 0} \Lambda^n$, where $\Lambda^n$ is the $\Z$-module spanned by homogenous symmetric functions 
of degree $n$, and $\Lambda^0=\Z$. Both of these $\Z$-modules are in fact $\Z$-algebras, and the Frobenius characteristic map 
$$
R \ni \chi \mapsto \mt{ch}(\chi) = \sum_{\rho \vdash n} z_\rho^{-1} \chi_\rho p_\rho \in \Lambda
$$
is a $\Z$-algebra isomorphism. Here, $z_\rho$ denotes the quantity $\prod_{i\geq 1} i^{m_i} m_i !$, where $m_i$ is the number of 
occurrence of $i$ as a part of $\rho$, $\chi_\rho$ is the value of the character $\chi$ on the conjugacy class indexed by 
the partition $\rho=(\rho_1,\dots, \rho_r)$, and $p_\rho$ is the power sums symmetric function $p_\rho = p_{\rho_1} \cdots p_{\rho_r}$.
Under Frobenius characteristic map, the irreducible character $\chi^\lambda$ of the representation $V_\lambda$ 
is mapped to the Schur function $s_\lambda:= \sum_{\rho \vdash n} z_\rho^{-1} \chi^\lambda_\rho p_\rho$. (This can be taken as 
the definition of a Schur function.)
The monomial symmetric function associated with partition $\lambda=(\lambda_1,\dots, \lambda_l)$ is defined as the sum of all monomials 
of the form $x_1^{\beta_1}\cdots x_l^{\beta_l}$, where $(\beta_1,\dots, \beta_l)$ ranges over all distinct permutations of $(\lambda_1,\dots, \lambda_l)$.
Any of the sets $\{s_\lambda\}_{\lambda \vdash n}, \{m_\lambda\}_{\lambda \vdash n}$, and
$\{p_\lambda\}_{\lambda \vdash n}$ forms a $\Q$-vector space basis for the vector space $\Lambda^n \otimes_\Z \Q$. 
Kostka numbers $K_{\lambda \mu}$ are defined as the coefficients in the expansion $s_\lambda = \sum_{\mu \vdash n} K_{\lambda \mu} m_\mu$.

\subsection{Symmetric functions and plethysm}

The plethysm of the Schur functions $s_\lambda \circ s_\mu$ is the symmetric function obtained from $s_\lambda$ by 
substituting the monomials of $s_\mu$ for the variables of $s_\lambda$. To spell this out more precisely we follow~\cite{LoehrRemmel}. 
The plethysm operator on symmetric functions is the unique map $\circ:\ \Lambda \times \Lambda \rightarrow \Lambda$ satisfying 
the following three axioms: 
\begin{enumerate}
\item[P1.] For all $m,n\geq 1$, $p_m \circ p_n = p_{mn}$.
\item[P2.] For all $m\geq 1$, the map $g \mapsto p_m \circ g$, $g\in \Lambda$ defines a $\Q$-algebra 
homomorphism on $\Lambda$. 
\item[P3.] For all $g\in \Lambda$, the map $h \mapsto h \circ g$, $h\in \Lambda$ defines a $\Q$-algebra 
homomorphism on $\Lambda$. 
\end{enumerate}

In general, computing the plethysm of two arbitrary symmetric functions is not easy.
Fortunately, there are some useful formulas involving Schur functions:
\begin{align}\label{A:plethysm formula 1}
s_\lambda \circ (g+h) &= \sum_{\mu,\nu} c_{\mu,\nu}^\lambda (s_\mu \circ g) (s_\nu \circ h),
\end{align}
and 
\begin{align}\label{A:plethysm formula 2}
s_\lambda \circ (gh) &= \sum_{\mu,\nu} \gamma_{\mu,\nu}^\lambda (s_\mu \circ g) (s_\nu \circ h).
\end{align}
Here, $g$ and $h$ are arbitrary symmetric functions, $c_{\mu,\nu}^\lambda$ is a scalar, 
and $\gamma_{\mu,\nu}^\lambda$ is $\frac{1}{n!} \langle \chi^\lambda, \chi^\mu \chi^\nu \rangle$.
In~(\ref{A:plethysm formula 1}) the summation is over all pairs of partitions $\mu,\nu \subset \lambda$,
and the summation in~(\ref{A:plethysm formula 2}) is over all pairs of partitions $\mu,\nu$ such that 
$|\mu| = |\nu | = |\lambda|$.
In the special case when $\lambda = (n)$, or $(1^n)$ we have
\begin{align}
s_{(n)} \circ (gh) &= \sum_{\lambda \vdash n} (s_\lambda \circ g) (s_\lambda \circ h), \label{A:complete}\\
s_{(1^n)} \circ (gh) &= \sum_{\lambda \vdash n} (s_\lambda \circ g) (s_{\lambda'} \circ h), \label{A:elementary}
\end{align}
where $\lambda'$ denotes the conjugate of $\lambda$.

In a similar vein, if $\rho$ denotes a partition, then the coefficient of $s_\mu$ in $p_\rho \circ h_n$ is given by $K^{(\rho)}_{\mu,n\rho}$,
{\em the generalized Kostka numbers}. Since we need this quantity in one of our calculations, we define it. 
A generalized tableau of type $\rho$ shape $\mu$ and weight $n\rho = (n\rho_1,n\rho_2,\dots, n\rho_m)$
is a sequence $T=(\nu^{(0)},\nu^{(1)},\dots, \nu^{(m)})$ of partitions satisfying the following conditions 
\begin{enumerate}
\item $0= \nu^{(0)} \subset \nu^{(1)} \subset \cdots \subset \nu^{(m)}=\mu$;
\item $|\nu^{(j)} - \nu^{(j-1)}| = n\rho_j$ for $1\leq j \leq m$;
\item $\nu^{(j)} \approx_{\rho_j} \nu^{(j-1)}$ for $1\leq j \leq m$. (See~\cite{Mac} Chapter I, $\S 5$, Example 24 for definition of $\approx_{\rho_j}$.)
\end{enumerate}
For such tableau, define $\sigma(T) := \prod_{j=1}^m \sigma_{\rho_j} (\nu^{(j)}/\nu^{(j-1)})$ which is $\pm 1$. 
(Once again, see~\cite{Mac} Chapter I, $\S 5$, Example 24 for definition of $\sigma_{\rho_j}(\cdot )$). Finally, we define $K^{(\rho)}_{\mu,n\rho}$ as 
the sum of $\sigma(T)$'s
$$
K^{(\rho)}_{\mu,n\rho} := \sum_{T} \sigma(T),
$$
where the sum ranges over all generalized tableau $T$ of type $\rho$, shape $\mu$, and of weight $n\rho$.

\section{Re-counting nilpotent rooks}\label{S:recounting}

Recall that a nilpotent file placement is the one with no cycles in its associated directed labeled graph.
We have a simple lemma re-interpreting this definition using matrices.

\begin{Lemma}
A file placement is nilpotent if and only if its associated matrix (as defined in Section~\ref{S:Introduction}) 
is nilpotent. 
\end{Lemma}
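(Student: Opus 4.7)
The plan is to reduce both sides of the claimed equivalence to a single concrete condition, namely that some iterate of $f$ is the empty partial function, and then to quote the matrix-to-composition compatibility that the paper has already recorded.

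First, I would recall the remark made just above the lemma: under the correspondence $f \mapsto M_f$, composition of partial transformations turns into matrix multiplication, so $M_{f\circ g}=M_f M_g$ and in particular $M_{f^k}=M_f^k$ for every $k\geq 1$. The next easy observation I would isolate is that $M_f=0$ if and only if $f$ is the empty partial transformation, i.e.\ the domain $A$ of $f$ is empty; this is immediate from the definition $f_{i,j}=1 \iff i\in A$ (in the identification with $\mc P_n^*\subset \mc T_n$ this corresponds to $g_f$ being the constant map $0$).

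Second, I would translate the semigroup notion of nilpotency into the same language. By definition $f\in \mc P_n$ is nilpotent exactly when the associated $g=g_f\in\mc T_n$ satisfies $g^k(x)=0$ for all $x\in\{0,1,\dots,n\}$ for some $k$. Since $g(0)=0$ automatically and since the canonical isomorphism $\mc P_n\cong\mc P_n^*$ sends composition to composition, the condition $g^k\equiv 0$ is equivalent to saying that the partial transformation $f^k$ has empty domain, i.e.\ $f^k$ is the empty partial transformation.

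Putting these two observations together gives a short chain of equivalences:
\begin{equation*}
f\text{ is nilpotent in }\mc P_n \iff f^k\text{ is empty for some }k \iff M_{f^k}=0 \iff M_f^k=0.
\end{equation*}
The last equivalence uses $M_{f^k}=M_f^k$ from the first step, and the conclusion is precisely that $M_f$ is nilpotent as a matrix.

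I do not anticipate a real obstacle here; the only thing to be careful about is the bookkeeping between $\mc P_n$ and its realization $\mc P_n^*\subset\mc T_n$, so that the word \emph{nilpotent} used in the semigroup sense matches the condition ``$f^k$ is empty for some $k$'' used on the matrix side. Once that identification is made explicit, everything reduces to the fact that the assignment $f\mapsto M_f$ is a semigroup homomorphism sending the empty partial transformation to the zero matrix.
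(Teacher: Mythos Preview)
Your argument is correct. You use directly that $f\mapsto M_f$ is a monoid homomorphism sending the empty partial transformation to the zero matrix, so nilpotency of $f$ in $\mc{P}_n$ (i.e.\ $f^k$ empty for some $k$) is literally the same statement as $M_f^k=0$. There is nothing missing, and the bookkeeping through $\mc{P}_n^*\subset\mc{T}_n$ is handled cleanly.

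The paper takes a different route: rather than appealing to the homomorphism property abstractly, it works with the digraph of $f$ and argues that a cycle $i_1\to i_2\to\cdots\to i_r\to i_1$ forces $f^m(i_1)\neq 0$ for all $m$, while conversely if every orbit $a,f(a),f^2(a),\dots$ eventually falls out of the domain then there can be no cycle. So the paper is really proving the three-way equivalence ``$f$ nilpotent in $\mc{P}_n$ $\iff$ digraph of $f$ acyclic $\iff$ $M_f$ nilpotent'', with the middle condition doing the work. Your proof bypasses the digraph entirely, which is shorter; the price is that you do not obtain the cycle characterization as a byproduct, and that characterization is what the paper immediately records as Remark~\ref{L:LaradjiUmar} and uses in the proof of Theorem~\ref{T:number of k files}. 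If you adopt your proof, you would want to state and prove the acyclicity criterion separately before it is invoked.

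One small quibble: the fact that composition corresponds to matrix multiplication is stated in Section~\ref{S:Introduction}, not ``just above the lemma''; you may want to adjust the reference.
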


\begin{proof}
Let $f :A\rightarrow [n]$ denote the partial transformation representing a file placement on $[n]\times [n]$. 
If the graph of $f$ has a non-trivial cycle, then there exists a sequence $i_1,\dots, i_r$ numbers from 
the domain $A$ of $f$ such that $f(i_1) = i_2,f(i_2) =i_3,\dots, f(i_r ) = i_1$. 
It follows that for any $m>0$, $f^m (i_1) = f^{m\mod r} ( i_1) \in \{ i_1,\dots, i_r\}$, hence 
no power of $f$ can be zero. 
Conversely, if $f$ is nilpotent, then for any $a\in A$, some power of $f$ vanishes on $a$.
Therefore, $a,f(a),f^2(a),\dots$ does not return to $a$ to become a cycle.
\end{proof}

\begin{Remark}\label{L:LaradjiUmar}
The proof of the above lemma implies that a file placement (hence, a non-attacking rook placement) 
$f : A\rightarrow [n]$ is nilpotent if and only if there does {\em not} exist a subset $D\subset A$ such that $f (D) = D$. 
This observation is recorded in~\cite{LaradjiUmar04}.
\end{Remark}

\begin{Theorem}\label{T:number of k files}
The number of nilpotent $k$-file placements is equal to ${n-1 \choose k} n^{k}$.
\end{Theorem}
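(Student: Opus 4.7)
The plan is to set up a bijection between nilpotent $k$-file placements on $[n]\times[n]$ and labeled rooted forests on $[n]$ with exactly $k$ edges, and then to count such forests by reducing to Cayley's theorem via a Prüfer-code argument.

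Given a nilpotent partial transformation $f : A \to [n]$ with $|A|=k$, its digraph has out-degree $1$ at every $i \in A$ and out-degree $0$ at every $i \in [n]\setminus A$. The nilpotency hypothesis precludes directed cycles, so iterating $f$ from any vertex terminates at some vertex of $[n]\setminus A$. In each weakly-connected component on $m$ vertices, the number of directed edges is at most $m$; if it were equal to $m$, then every vertex in the component would have out-degree $1$ and pigeonhole would force a directed cycle. Hence the component has exactly $m-1$ edges, its underlying undirected graph is a tree, and exactly one vertex has out-degree $0$. Declaring that vertex the root turns the component into a rooted tree, and the union over components is a labeled rooted forest on $[n]$ with exactly $k$ edges. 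The inverse sends each non-root vertex to its parent and leaves each root outside the domain. This establishes the bijection.

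Counting labeled rooted forests on $[n]$ with $n-k$ components is then standard. Following the observation recalled in Section~\ref{S:Preliminaries} (the one equating forests on $n$ vertices with trees on $n+1$ vertices), adjoining a new vertex $0$ and connecting it by an edge to every root yields a bijection with labeled trees on $\{0,1,\dots,n\}$ in which the vertex $0$ has degree $n-k$. By the Prüfer correspondence, such trees are enumerated by sequences of length $n-1$ in which $0$ appears exactly $n-k-1$ times and the remaining entries lie in $[n]$, so there are
\[
\binom{n-1}{n-k-1}\, n^{k} \;=\; \binom{n-1}{k}\, n^{k}
\]
of them, which is the desired count.

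The main obstacle is the structural step in the first paragraph: verifying that each weakly-connected component of the digraph really is a rooted tree whose root is the unique out-degree-$0$ vertex. Once this is settled by the pigeonhole argument sketched above, everything else is a routine application of the Prüfer-code refinement of Cayley's theorem; one could alternatively bypass the super-root reduction and directly encode nilpotent partial transformations as sequences, but the reduction to Cayley keeps the exposition aligned with the terminology already set up in Section~\ref{S:Preliminaries}.
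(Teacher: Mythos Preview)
Your argument is correct and follows the same high-level strategy as the paper: identify nilpotent $k$-file placements with labeled rooted forests on $[n]$ having $n-k$ components, then invoke the refined Cayley count. The execution differs in two places. First, the paper establishes the forest structure by induction on $k$ (peeling off a vertex outside the image and appealing to Remark~\ref{L:LaradjiUmar}), whereas you give a direct pigeonhole argument on out-degrees in each weak component; your version is cleaner and avoids the inductive bookkeeping. Second, the paper simply cites the formula $\binom{n-1}{k-1}n^{n-k}$ for forests with $k$ roots from Comtet, while you rederive it by adjoining a super-root and reading off the degree condition in the Pr\"ufer sequence. The net effect is that your proof is more self-contained, at the cost of assuming the reader knows the Pr\"ufer correspondence; the paper's proof is shorter but leans on an external reference for the enumeration.
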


\begin{proof}
It is a well known variation of the Cayley's theorem that the number of labeled forests on $n$ vertices with $k$ roots
is equal to ${n-1 \choose k-1} n^{n-k}$. See~\cite{Comtet}, Theorem D, pg 70.
Since the labeled directed graph of a nilpotent partial transformation has no cycles, it is a disjoint union of 
trees and the total number of vertices is $n$. Therefore, it remains to show that the labeled 
forest of a $k$-file placement $f$ has exactly $n-k$ connected components. 
We prove this by induction on $k$.

If $k=1$, then the forest of $f$ has one component on two vertices and $n-2$ singletons. 
Therefore,  the base case is clear. Now we assume that our claim is true for $k-1$ and prove it 
for $k$-file placements. Let $j \in A$ be a number that is not contained in the image of $f$
whose existence is guaranteed by Lemma~\ref{L:LaradjiUmar}.
Define $\tilde{f} : A - \{j\} \rightarrow [n]$ by setting $\tilde{f}(i) = f(i)$ for $i\in A - \{j\}$. 
Therefore, $\tilde{f}$ is a $(k-1)$-file placement agreeing with $f$ at all places except at 
$\{j\}$, where it is undefined.
By our induction hypothesis, the forest of $\tilde{f}$ has exactly $n-(k-1)$ connected components. 
Observe that the forest of $f$ differs from that of $\tilde{f}$ by exactly one directed edge from $j$ to $f(j)$. 
Since $\{ j \}$ is a connected component of $\tilde{f}$, the number of connected components of $f$ 
is one less than that of $\tilde{f}$, hence the proof is complete.

\end{proof}

\begin{Remark}\label{R:LaradjiUmar}
Theorem~\ref{T:number of k files} gives the number of partial transformations on $[n]$ having exactly $k$ elements in their domain. 
There is a similar count for the partial transformations in ~\cite{LaradjiUmar04}. 
For completeness of the section let us briefly present this:
Let $N(J_r)$ denote the set of partial transformations $f: A \rightarrow [n]$ with $|f(A)| = r$. 
In their Theorem 3, Laradji and Umar compute that 
$$
|N(J_r)|= {n \choose r} S(n, r+1) r!,
$$
where $S(n,r+1)$ is the Stirling number of the second kind, namely the number of set partitions of $[n]$ into $r+1$ non-empty blocks. 
\end{Remark}

$N_{k,n}$ denote the number of nilpotent $k$-file placements in $[n]\times [n]$. 
Let us say a few words about the exponential generating series of $N_{k,n}$. Define $E_x(y)$ by 
\begin{align}\label{A:egf of nilpotent files}
E_x(y):=\sum_{n\geq 1} \sum_{k\geq 0}^{n-1} N_{k,n} x^k\frac{y^n}{n!} =  \sum_{n \geq 1} (1+nx)^{n-1} \frac{y^n}{n!}.
\end{align}
For the following identities, see~\cite[Chapter 5]{GKP}.
\begin{enumerate}
\item $E^{-x} \ln E = y$,
\item $E^\alpha = \sum_{n\geq 0} \alpha (\alpha + nx)^{n-1} \frac{y^n}{n!}$ for all $\alpha \in \R$,
\item $\frac{E^\alpha}{1-xyE^x} = \sum_{n\geq 0}  (\alpha + nx)^{n} \frac{y^n}{n!}$ for all $\alpha \in \R$.
\end{enumerate}
Let $t_{n+1}$ denote the number of rooted trees on $n+1$ vertices. 
Manipulation of the generating functions lead to the following non-trivial recurrence for $t_n$'s:
\begin{align}\label{A:non-trivial}
t_{n+1} = \frac{1}{n} \sum_{k=1}^n \left( \sum_{d \mid k } d t_d \right) t_{n-k+1} \qquad (t_1=1).
\end{align}
This equation indicates that the number of nilpotent file placements is not as easily expressible as one wishes.

\section{A case study}\label{S:case study}

We start with fixing our notation. The set of all nilpotent $k$-file placements on $[n]\times [n]$ board is denoted by $\mc{C}_{k,n}$.
In other words, $\mc{C}_{k,n}= \mt{Nil}(\mc{P}_n) \cap \mc{P}_k ([n]\times [n])$. Let $\mc{C}_n$ denote the union  
\begin{align*}
\mc{C}_n = \bigcup_{k=0}^{n-1} \mc{C}_{k,n}.
\end{align*}

Obviously, there is a single nilpotent partial transformation on $\{1\}$.
For $n=2$, the nilpotent partial transformations are 
\begin{align*}
\mc{C}_{0,2} = \{[0,0]\},\qquad
\mc{C}_{1,2} = \{ [0, 1], [2,0] \},
\end{align*}
and for $n=3$ we have
\begin{align*}
\mc{C}_{0,3} &= \{[0,0,0]\},\\
\mc{C}_{1,3} &= \{ [0, 1, 0], [0, 3, 0], [0, 0, 1], [0, 0, 2], [2, 0, 0], [3, 0, 0] \},\\ 
\mc{C}_{2,3} &= \{ [0, 1, 2], [0, 1, 1], [0, 3, 1], [2, 0, 1], [2, 0, 2], [2, 3, 0], [3, 0, 2], [3, 1, 0], [3, 3, 0]\}. 
\end{align*}
Symmetric group $S_3$ acts on each $C_{i,3}$, $i=0,1,2$ by conjugation. 
The table of corresponding character values are easy to determine by counting fixed points of the action.
\begin{table}[htp]
\begin{center}
\begin{tabular}{c | c c c }
$g$   &  $\mc{C}_{0,3}$ &  $\mc{C}_{1,3}$ & $\mc{C}_{2,3}$  \\
\hline
(1)(2)(3)    & 1 & 6 & 9   \\
(12)(3)      & 1 &  0 & 1     \\
(123)        & 1 &  0 &  0\\
\end{tabular}
\end{center}
\caption{Character values of the $S_3$ representation on nilpotent 3-file placements}
\label{T:1}
\end{table}
(The first column in Table~\ref{T:1} is the list of representatives for each conjugacy class in $S_3$.)
In the next table we have the character values of all irreducible representations of $S_3$. 
In the last column, we have listed the sizes of the corresponding conjugacy classes:
\begin{table}[htp]
\begin{center}
\begin{tabular}{c | c c c | c}
$g\in S_3$  &  $V_{(3)}$ &  $V_{(2,1)}$ & $V_{(1,1,1)}$ & $c(g)$ \\
\hline
 (1)(2)(3)    & 1 & 2 & 1  & 1 \\
 (12)(3)      & 1 &  0 & -1  & 3   \\
 (123)        & 1 &  -1 &  1 & 2\\
\end{tabular}
\end{center}
\caption{Irreducible character values of $S_3$}
\label{T:2}
\end{table}
Now, from Tables~\ref{T:1} and~\ref{T:2} we see that 
\begin{itemize}
\item $\mc{C}_{0,3}$ is the trivial representation $V_{(3)}$. 
\item $\mc{C}_{1,3}$ is equal to $V_{(3)} \oplus V_{(2,1)}^2 \oplus V_{(1,1,1)}$. 
\item $\mc{C}_{2,3} = V_{(3)}^2 \oplus V_{(2,1)}^3 \oplus V_{(1,1,1)}$. 
\end{itemize}

Going back to cases $n=1$ and $n=2$, we compute also that, as a representation of $S_1$, 
$\mc{C}_{0,1}$ is the unique irreducible (trivial) representation $V_{(1)}$ of $S_1$.
Similarly, $\mc{C}_{0,2} = V_{(2)}$, and $\mc{C}_{1,2} = V_{(2)} \oplus V_{(1,1)}$.
We listed the decomposition tables for $n=4,5$ and $n=6$ at the end of the paper.

\section{Induced representations and labeled rooted forests}\label{S:induced representation}

It follows from the proof of Theorem~\ref{T:number of k files} that there is a correspondence 
between nilpotent $k$-file placements and labeled rooted forests on $n-k$ components.   
If $f$ is a $k$-placement, we denote by $\sigma=\sigma_f$ the corresponding labeled rooted forest.
Let us denote by $\chi^{n}:= \chi^{\mc{C}_{n}}$ the character of the conjugation action on nilpotent file placements
and denote by $\chi^{k,n}:= \chi^{\mc{C}_{k,n}}$ the character of the conjugation action on nilpotent $k$-file placements.
Since $S_n$ action does not change the number of rooks, we have $\chi^n= \sum_{k=0}^{n-1} \chi^{k,n}$. 
The module $\mc{C}_{0,n} = V_{(n)}$ is the trivial representation of $S_n$, so 
we are going to focus on the cases where $k\geq 1$.

\begin{Lemma}\label{L:odun}
For $i=1,\dots, n-1$, let $(i,i+1) \in S_n$ denote the simple transposition that interchanges $i$ and $i+1$. 
If $\sigma=\sigma_f$ is a labeled rooted tree corresponding to a nilpotent $k$-file placement $f$, 
then the underlying unlabeled rooted tree of $(i,i+1)\cdot \sigma$ is equal to that of $\sigma$. 
Moreover, if $o$ is a fixed unlabeled rooted tree, then $S_n$ acts transitively on the set of elements $\sigma \in \mc{C}_{k,n}$
whose underlying tree is equal to $o$.
\end{Lemma}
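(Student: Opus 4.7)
\textbf{Proof proposal for Lemma~\ref{L:odun}.} The plan is to unpack what conjugation does to the associated digraph, observe that it amounts to a relabeling of vertices, and then read off both claims from this observation.

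First I would compute the conjugation action on a partial transformation $f \in \mc{P}_n$ directly. If $\pi \in S_n$ and we regard $f$ as an element of $\mc{P}_n^* \subset \mc{T}_n$ (fixing $0$), then conjugation gives $(\pi f \pi^{-1})(i) = \pi(f(\pi^{-1}(i)))$ for $i \in [n]$, with the convention $\pi(0)=0$. In particular, the digraph of $\pi f \pi^{-1}$ has a directed edge from $i$ to $j$ precisely when the digraph of $f$ has a directed edge from $\pi^{-1}(i)$ to $\pi^{-1}(j)$. In graph-theoretic language, conjugation by $\pi$ replaces the label $v$ on each vertex by $\pi(v)$, leaving the underlying unlabeled rooted forest untouched. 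Applying this to $\pi = (i,i+1)$ gives the first assertion; since $(i,i+1)$ was arbitrary and such transpositions generate $S_n$, the same holds for every $\pi \in S_n$.

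For transitivity, suppose $\sigma_1 = \sigma_{f_1}$ and $\sigma_2 = \sigma_{f_2}$ are two elements of $\mc{C}_{k,n}$ with the same underlying unlabeled rooted forest $o$. By definition, each $\sigma_j$ is obtained from $o$ by a bijective labeling of its $n$ vertices by $[n]$, that is, by a bijection $\phi_j : V(o) \rightarrow [n]$ which is an isomorphism of rooted forests from $o$ to $\sigma_j$. Setting $\pi := \phi_2 \circ \phi_1^{-1} \in S_n$, the reading of the conjugation action above shows that $\pi \cdot \sigma_1 = \sigma_2$, which yields the transitivity statement.

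The argument is essentially bookkeeping, so the only real obstacle is being careful with the convention that pinpoints how the conjugation on $\mc{P}_n$ translates into an action on digraphs — once that is made explicit, both conclusions follow immediately from the fact that any two labelings of the same unlabeled shape $o$ differ by a permutation of vertex labels.
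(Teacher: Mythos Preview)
Your proof is correct and follows essentially the same approach as the paper: both arguments rest on the observation that conjugation by a permutation simply relabels the vertices of the associated digraph, from which both claims are immediate. The only cosmetic differences are that the paper computes the effect of $(i,i+1)$ concretely in one-line notation rather than via the formula $(\pi f\pi^{-1})(i)=\pi(f(\pi^{-1}(i)))$, and for transitivity the paper shows one can swap any two chosen labels via a single transposition, whereas you directly build $\pi=\phi_2\circ\phi_1^{-1}$ from the two labelings; neither difference is substantive.
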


\begin{proof}
Suppose $[f_1,f_2,\dots, f_n]$ is the one line notation for $f$. The conjugation action of $s_i$ on $f$ has the following effect: 
1) the entries $f_i$ and $f_{i+1}$ are interchanged,
2) if $f_l = i$ for some $1\leq l \leq n$, then $f_l$ is replaced by $i+1$. Similarly, if $f_{l'} = i+1$ for some $1\leq l' \leq n$, 
then $f_{l'}$ is replaced by $i$. These operations do not change the underlying graph structure, they act as permutations 
on labels only. Hence, $u((i,i+1)\cdot \sigma)=u(\sigma)$. 

To prove the last statement we fix a labeled rooted tree $\sigma$. It is enough to show the existence of a permutation
which interchanges two chosen labels $i$ and $k$ on $\sigma$ without changing any other labels. Looking at the one-line 
notation for $f$, we see that the action of transposition $(i,k)$ gives the desired result.
\end{proof}

{\it Caution:}
Recall our terminology from the introductory section; the odun $o(\sigma)$ of $\sigma$ is the $S_n$-representation on 
the orbit $S_n\cdot \sigma$. By Lemma~\ref{L:odun}, we see that the odun is completely determined by the underlying 
(unlabeled) rooted tree. Therefore, if there is no danger of confusion, we use the word ``odun'' for the underlying unlabeled 
structure as well.

\begin{Corollary}
The multiplicity of the trivial character in $\chi^n$ is the number of rooted forests on $n$ vertices. 
Equivalently, $\langle \chi^{(n)}, \chi^n \rangle = $ number of rooted trees on $n+1$ vertices. 
\end{Corollary}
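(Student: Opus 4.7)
The plan is to apply Burnside's orbit-counting formula together with Lemma~\ref{L:odun}. Since $\chi^{(n)}$ is the trivial character (taking the value $1$ on every $g\in S_n$), the inner product reduces to
$$
\langle \chi^{(n)}, \chi^n \rangle = \frac{1}{n!} \sum_{g\in S_n} \chi^n(g),
$$
and the fixed-point interpretation of permutation characters identifies $\chi^n(g)$ with the number of elements of $\mc{C}_n$ fixed by conjugation by $g$. Hence this inner product equals the number of $S_n$-orbits on $\mc{C}_n = \bigsqcup_{k=0}^{n-1} \mc{C}_{k,n}$.

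Next, I would invoke Lemma~\ref{L:odun} (applied component-wise, so that it controls forests and not only individual trees): the conjugation action of $S_n$ on $\mc{C}_{k,n}$ preserves the underlying unlabeled rooted forest of $\sigma_f$, and two labeled forests with the same underlying unlabeled shape lie in the same orbit. Consequently, the $S_n$-orbits on $\mc{C}_n$ are in bijection with unlabeled rooted forests on $n$ vertices (of arbitrary number of components), proving the first assertion.

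For the ``equivalently'' clause, I would use the standard bijection recalled in Section~\ref{S:Preliminaries}: attaching a fresh vertex and joining it as parent to every root of a rooted forest on $n$ vertices produces a rooted tree on $n+1$ vertices, and this map is a bijection on the unlabeled level as well as the labeled one. Therefore the number of unlabeled rooted forests on $n$ vertices equals the number of unlabeled rooted trees on $n+1$ vertices, which finishes the proof.

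The only subtle point is the passage from Lemma~\ref{L:odun}, which is phrased for a single rooted tree, to rooted forests; but the argument there uses only the description of the action of a simple transposition on the one-line notation $[f_1,\dots,f_n]$, and this is insensitive to whether the underlying graph is connected. So I expect no real obstacle — the corollary is essentially a direct translation of Lemma~\ref{L:odun} through Burnside's lemma followed by the forest-to-tree bijection.
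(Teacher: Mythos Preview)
Your argument is correct and matches the paper's own proof: both use the standard fact that the multiplicity of the trivial representation in a permutation representation equals the number of orbits, then invoke Lemma~\ref{L:odun} to identify the orbits with unlabeled rooted forests. You supply a bit more detail (spelling out Burnside and the forest-to-tree bijection for the ``equivalently'' clause, and noting that Lemma~\ref{L:odun}'s proof applies verbatim to forests despite its ``tree'' wording), but the approach is the same.
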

\begin{proof}
This is a standard fact: The multiplicity of the trivial representation in any permutation representation is equal to the number 
of orbits of the action. By Lemma~\ref{L:odun}, this number is equal to the number of oduns (unlabeled rooted trees).
\end{proof}

\begin{Example}\label{E:n=3}

Let $\sigma$ be a labeled rooted forest, and let $o=o(\sigma)$ denote its odun. We denote the corresponding 
character by $\chi^{o(\sigma)}$. Here, we produce three examples of forest representations, decomposed into 
irreducibles that we use in the sequel.

\begin{figure}[htp]
\centering
\begin{tikzpicture}[scale=.5]
\begin{scope}[yshift=3.5cm]
\node at (-6,0) {$1.$};
\node at (-2.5,0) {$o(\sigma) = $};
\node at (0,-1) {$\bullet$};
\node at (0,0) {$\bullet$};
\node at (0,1) {$\bullet$};
\node at (9.5,0) {$\implies \qquad \chi^{o(\sigma)} = \chi^{(3)}+2\chi^{(2,1)}+\chi^{(1^3)}$};
\draw[-, thick] (0,-1) to  (0,1);
\end{scope}

\begin{scope}[yshift=0cm]
\node at (-6,0) {$2.$};
\node at (-2.5,0) {$o(\sigma) = $};
\node (1) at (-.5,-.5) {$\bullet$};
\node (2) at (0,.75) {$\bullet$};
\node (3) at (.5,-.5) {$\bullet$};

\node at (8,0) {$\implies \qquad \chi^{o(\sigma)} = \chi^{(2,1)}+\chi^{(1^3)}$};
\draw[-, thick] (-.5,-.5) to  (0,.75);
\draw[-, thick] (.5,-.5) to  (0,.75);
\end{scope}

\begin{scope}[yshift=-3.5cm]
\node at (-6,0) {$3.$};
\node at (-2.5,0) {$o(\sigma) = $};
\node (1) at (-.5,-.5) {$\bullet$};
\node (2) at (-.5,.5) {$\bullet$};
\node (3) at (.5,-.5) {$\bullet$};
\node (4) at (.5,.5) {$\bullet$};

\node at (12.5,0) {$\implies \qquad \chi^{o(\sigma)} = \chi^{(4)}+\chi^{(3,1)}+ 2\chi^{(2,2)}+\chi^{(2,1,1)}+ \chi^{(1^4)}$};
\draw[-, thick] (-.5,-.5) to  (-.5,.5);
\draw[-, thick] (.5,-.5) to  (.5,.5);
\end{scope}
\end{tikzpicture}

\end{figure}

\end{Example}

\begin{Theorem}\label{T:add a vertex}
Let $\sigma \in \mc{C}_{k,n}$ be a labeled rooted forest on $n$ vertices. Let $\chi^{\mathds{1}}$ denote the character 
of the unique (1 dimensional) representation of $S_1$. If $\widetilde{\sigma}$ denotes the labeled rooted tree obtained 
from $\sigma$ by connecting its roots to a new root, which is labeled by $n+1$, then 
$$
\chi^{o({\widetilde{\sigma}})} = \chi^{\mathds{1}}\cdot  \chi^{o(\sigma)}.
$$
\end{Theorem}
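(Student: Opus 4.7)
The plan is to realize both sides of the asserted identity as characters of transitive permutation $S_{n+1}$-representations and to match them by a single stabilizer computation. Recall that, under the Frobenius characteristic isomorphism, multiplication in the ring $R$ corresponds to inducing an external tensor product from a Young subgroup. Thus the right hand side $\chi^{\mathds{1}} \cdot \chi^{o(\sigma)}$ is the character of the induced representation
\[
\mathrm{Ind}_{S_1 \times S_n}^{S_{n+1}}\bigl(\mathds{1}\,\boxtimes\, V_{o(\sigma)}\bigr),
\]
where $S_1 \subset S_{n+1}$ denotes the subgroup acting on the letter $n+1$ and $S_n\subset S_{n+1}$ the subgroup acting on $\{1,\dots,n\}$.

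By Lemma~\ref{L:odun}, the left hand side is the permutation character of $S_{n+1}$ acting on the orbit $o(\widetilde\sigma)=S_{n+1}\cdot\widetilde\sigma$, so $V_{o(\widetilde\sigma)}\cong \C[S_{n+1}/\mathrm{Stab}_{S_{n+1}}(\widetilde\sigma)]$. The first substantive step is to identify this stabilizer. Since conjugation by $g$ only relabels the vertices of $\widetilde\sigma$ without altering the underlying directed rooted graph (cf.\ the proof of Lemma~\ref{L:odun}), the equation $g\cdot\widetilde\sigma=\widetilde\sigma$ forces the root label to be preserved, i.e.\ $g(n+1)=n+1$. The restriction $g|_{[n]}$ then fixes the labeled forest $\sigma$, so $g|_{[n]}\in\mathrm{Stab}_{S_n}(\sigma)$; conversely, any such $g$ stabilizes $\widetilde\sigma$. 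Hence
\[
\mathrm{Stab}_{S_{n+1}}(\widetilde\sigma)\;=\;\{e\}\times\mathrm{Stab}_{S_n}(\sigma)\;\subseteq\;S_1\times S_n\;\subseteq\;S_{n+1}.
\]

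The remaining step is routine. Since $V_{o(\sigma)}\cong \C[S_n/\mathrm{Stab}_{S_n}(\sigma)]=\mathrm{Ind}_{\mathrm{Stab}_{S_n}(\sigma)}^{S_n}\mathds{1}$, transitivity of induction yields
\[
\mathrm{Ind}_{S_1\times S_n}^{S_{n+1}}\bigl(\mathds{1}\boxtimes V_{o(\sigma)}\bigr)\;=\;\mathrm{Ind}_{S_1\times\mathrm{Stab}_{S_n}(\sigma)}^{S_{n+1}}\mathds{1}\;=\;\C\bigl[S_{n+1}/\mathrm{Stab}_{S_{n+1}}(\widetilde\sigma)\bigr]\;\cong\;V_{o(\widetilde\sigma)},
\]
and passing to characters delivers the asserted identity. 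I do not anticipate a genuine obstacle; the only point requiring any care is verifying that an element stabilizing $\widetilde\sigma$ must fix the root label, which is automatic once one invokes the fact from Lemma~\ref{L:odun} that the conjugation action merely permutes labels along a fixed underlying rooted graph.
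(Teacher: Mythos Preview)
Your proof is correct and follows essentially the same route as the paper: both arguments identify $o(\widetilde{\sigma})$ with $\mathrm{Ind}_{S_1\times S_n}^{S_{n+1}}(\mathds{1}\boxtimes o(\sigma))$. The paper phrases this by partitioning the $S_{n+1}$-orbit of $\widetilde{\sigma}$ into $n+1$ blocks according to the root label and recognizing the resulting decomposition as the defining one for an induced module, whereas you compute the point stabilizer $\mathrm{Stab}_{S_{n+1}}(\widetilde{\sigma})=\{e\}\times\mathrm{Stab}_{S_n}(\sigma)$ directly and invoke transitivity of induction; these are two formulations of the same computation.
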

\begin{proof}
 
Adding a vertex with label $n+1$ as the unique root transforms $\sigma$ to an element $\widetilde{\sigma}$ in $\mc{C}_{n,n+1}$. 
The $S_{n+1}$-orbit of $\widetilde{\sigma}$ decomposes into exactly $n+1$ orbits. 
If the root of an orbit $A$ has label $i$, then the representation of the Young subgroup $S_1\times S_n$ on $A$ is isomorphic 
to the $S_n$ representation on the set $S_n\cdot \sigma$, which, by definition, is the odun of $\sigma$. 
Note that $S_1\times S_n$ is the stabilizer subgroup in $S_{n+1}$ of the label $i$. Therefore, 
\begin{align}
o(\widetilde{\sigma}) = \oplus_{\pi \in S_{n+1}/S_n} \pi \cdot A.
\end{align}
Notice also that here we are repeating the definition of an induced representation. 
The subgroup $S_n\times S_1$ of $S_{n+1}$ acts on $o(\widetilde{\sigma})$ by fixing the label of the new vertex $n+1$. 
But this representation is isomorphic to $S_n$-module $o(\sigma)$. 
Hence, our proof follows. 
\end{proof}


\begin{Corollary}\label{C:character of n-1}
The character $\chi^{n-1,n}$ of $\mc{C}_{n-1,n}$ is equal to $\chi^{\mathds{1}} (\chi^{0,n-1}+\chi^{1,n-1}+\cdots + \chi^{n-2,n-1})$.
\end{Corollary}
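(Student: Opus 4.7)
The plan is to apply Theorem~\ref{T:add a vertex} to every $S_n$-orbit of $\mc{C}_{n-1,n}$ and then sum. First, since $n-(n-1)=1$, the correspondence established in the proof of Theorem~\ref{T:number of k files} identifies $\mc{C}_{n-1,n}$ with the set of labeled rooted \emph{trees} on $n$ vertices (i.e., forests with a single connected component). By Lemma~\ref{L:odun}, the $S_n$-orbits on $\mc{C}_{n-1,n}$ are in bijection with the unlabeled rooted trees on $n$ vertices, and within each orbit we may choose a representative $\widetilde{\sigma}$ whose unique root carries the label $n$.

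Given such a representative $\widetilde{\sigma}$, removing its root produces a labeled rooted forest $\sigma$ on the vertex set $[n-1]$, which lies in $\mc{C}_{k,n-1}$ for some $k\in\{0,1,\dots,n-2\}$. Conversely, every labeled rooted forest on $[n-1]$ is the root-deletion of exactly one such $\widetilde{\sigma}$, namely the one constructed in Theorem~\ref{T:add a vertex} by adjoining a new root labeled $n$. Passing to unlabeled objects, this furnishes a bijection between unlabeled rooted trees on $n$ vertices and unlabeled rooted forests on $n-1$ vertices.

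Now I would apply Theorem~\ref{T:add a vertex} orbit-by-orbit to obtain $\chi^{o(\widetilde{\sigma})}=\chi^{\mathds{1}}\cdot\chi^{o(\sigma)}$, and then sum the resulting identities over all orbits of $S_n$ on $\mc{C}_{n-1,n}$:
\begin{align*}
\chi^{n-1,n} \;=\; \sum_{\widetilde{\sigma}} \chi^{o(\widetilde{\sigma})} \;=\; \chi^{\mathds{1}}\cdot \sum_{\sigma} \chi^{o(\sigma)},
\end{align*}
where $\widetilde{\sigma}$ runs over orbit representatives in $\mc{C}_{n-1,n}$ and $\sigma$ runs over (representatives of) unlabeled rooted forests on $n-1$ vertices. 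The inner sum then splits according to the number of components of $\sigma$: since the orbits of $S_{n-1}$ on $\mc{C}_{k,n-1}$ are parameterized by unlabeled rooted forests on $n-1$ vertices with exactly $n-1-k$ components, we get $\sum_{\sigma} \chi^{o(\sigma)} = \sum_{k=0}^{n-2} \chi^{k,n-1}$, which is the required identity.

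The only thing requiring care is purely notational: the product $\chi^{\mathds{1}}\cdot\chi^{o(\sigma)}$ denotes the induced character from the Young subgroup $S_1\times S_{n-1}\leq S_n$, i.e., it is the operation corresponding (under the Frobenius characteristic map) to multiplication of symmetric functions. Once this convention is fixed, together with the orbit-to-odun correspondence from Lemma~\ref{L:odun}, the proof reduces to the summation above; I do not anticipate any genuinely hard step.
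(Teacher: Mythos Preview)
Your proof is correct and follows essentially the same approach as the paper: the paper's one-sentence argument simply observes that elements of $\mc{C}_{n-1,n}$ are obtained from those of $\mc{C}_{n-1}=\bigcup_{k=0}^{n-2}\mc{C}_{k,n-1}$ by adjoining a new root, then appeals to Theorem~\ref{T:add a vertex}. Your version spells out the orbit-by-orbit application of that theorem and the tree/forest bijection explicitly, but the underlying idea is identical.
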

\begin{proof}
This follows from the fact that the elements of $\mc{C}_{n-1,n}$ are obtained from those of $\mc{C}_{n-1}$ by adding 
a single vertex as the new root. 
\end{proof}

The idea of the proof of our next result is identical to that of Theorem~\ref{T:add a vertex}, so we skip it.
\begin{Theorem}\label{T:add a regular representation}
Let $\chi^{(k)}$ denote the character of the standard representation of $S_k$. Suppose that $\sigma$ has $m$ connected 
components. If $\widetilde{\sigma}$ is the labeled rooted forest obtained from $\sigma$ by adding $k$ isolated roots (hence it 
has $m+k$ connected components), then 
$$
\chi^{o({\widetilde{\sigma}})} = \chi^{(k)}\cdot  \chi^{o(\sigma)}.
$$
\end{Theorem}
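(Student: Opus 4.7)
The plan is to repeat verbatim the induction-theoretic argument of Theorem~\ref{T:add a vertex}, with the single new root replaced by $k$ new isolated roots and with the Young subgroup $S_1 \times S_n$ replaced by $S_n \times S_k \subset S_{n+k}$. I would place the labels $\{1,\dots,n\}$ on $\sigma$ and the labels $\{n+1,\dots,n+k\}$ on the $k$ added isolated vertices, and view $S_n \times S_k$ as the subgroup of $S_{n+k}$ permuting these two label blocks separately.

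First I would isolate the subset $A \subseteq o(\widetilde{\sigma})$ consisting of those labelings of the underlying unlabeled forest in which the labels $\{n+1,\dots,n+k\}$ are precisely the ones appearing on the $k$ added isolated vertices. The set $A$ is $(S_n \times S_k)$-stable. Because each of the $k$ added vertices is isolated, any permutation of their labels among themselves leaves the resulting labeled forest unchanged; consequently the $S_k$-factor acts trivially on $A$. The $S_n$-factor permutes the labels of $\sigma$, and that action is, by definition, the odun $o(\sigma)$. So, as an $(S_n \times S_k)$-module,
\[
A \;\cong\; o(\sigma) \boxtimes \mathbf{1}_{S_k},
\]
where $\mathbf{1}_{S_k}$ is the trivial representation, whose character is $\chi^{(k)}$.

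Next I would argue that the full $S_{n+k}$-orbit breaks up as a disjoint union
\[
o(\widetilde{\sigma}) \;=\; \bigsqcup_{\pi \in S_{n+k}/(S_n \times S_k)} \pi \cdot A,
\]
with one piece for each of the $\binom{n+k}{k}$ left cosets, each coset corresponding to the choice of which $k$-subset of $[n+k]$ is placed on the isolated roots. This realization of $o(\widetilde{\sigma})$ as a sum of translates of $A$ is literally the construction of the induced representation $\mathrm{Ind}_{S_n \times S_k}^{S_{n+k}}\bigl(o(\sigma) \boxtimes \mathbf{1}_{S_k}\bigr)$; translating back to characters then yields the asserted outer-product identity $\chi^{o(\widetilde{\sigma})} = \chi^{(k)} \cdot \chi^{o(\sigma)}$.

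The step that needs genuine verification—and thus the main obstacle—is the disjointness of the cosets in the decomposition above: one must check that distinct $k$-subsets of $[n+k]$ assigned to the isolated roots really do produce distinct labeled forests. This uses the fact that the new isolated vertices are graph-theoretically distinguishable from the components of $\sigma$, which is precisely the non-overlap hypothesis appearing in Remark~\ref{R:more general}. Once this is settled, the rest is a direct transcription of the proof of Theorem~\ref{T:add a vertex}.
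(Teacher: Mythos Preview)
Your proposal is correct and follows exactly the approach the paper indicates: the paper omits the proof entirely, stating only that ``the idea of the proof of our next result is identical to that of Theorem~\ref{T:add a vertex},'' and your argument is precisely that transcription, with $S_1\times S_n$ replaced by $S_n\times S_k$ and the coset decomposition running over $\binom{n+k}{k}$ pieces instead of $n+1$. Your identification of the set $A$, the triviality of the $S_k$-action on it, and the recognition of the coset sum as the defining construction of $\mathrm{Ind}_{S_n\times S_k}^{S_{n+k}}(o(\sigma)\boxtimes\mathbf{1}_{S_k})$ are all exactly right.

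You are also correct to flag the disjointness issue and to link it to the non-overlap hypothesis of Remark~\ref{R:more general}: if $\sigma$ itself already contains isolated vertices, the new isolated roots are not graph-theoretically distinguishable from them, the coset pieces overlap, and indeed the identity $\chi^{(j+k)}=\chi^{(k)}\cdot\chi^{(j)}$ fails (in Frobenius characteristic, $h_{j+k}\neq h_j h_k$). The paper's statement tacitly assumes $\sigma$ has no isolated-vertex components, and you have made that assumption explicit where the paper did not.
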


\begin{Remark}\label{R:more general}
More general than Theorem~\ref{T:add a vertex} with an almost identical proof is the following statement: 
Suppose $\nu$ is a labeled rooted forest of the form $\tau \cup \nu$ (disjoint union), where $\tau$ and $\nu$ are 
labeled rooted trees on $m$ and $k$ vertices, respectively. If $\tau$ and $\nu$ do not have any identical  
connected component, then 
\begin{align}
\chi^{\sigma}= \chi^\sigma \cdot \chi^\tau.
\end{align}
\end{Remark}

Using Theorems~\ref{T:add a regular representation} and~\ref{T:add a vertex} we perform a sample calculation of the 
characters for small $k$. 
\begin{Proposition}\label{P:character of 1 and 2}
For any integer $n$ with $n\geq 3$, we have 
\begin{enumerate}
\item[(i)] $\mc{C}_{1,n} = V_{(n)} \oplus V_{(n-1,1)}^2 \oplus V_{(n-2,2)} \oplus V_{(n-2,1,1)}$.
\item[(ii)] $\mc{C}_{2,n} = (V_{(3)}\oplus V_{(2,1)}^2\oplus V_{(1^3)})\otimes V_{(n-3)} \oplus (V_{(2,1)}^2\oplus V_{(1^3)})\otimes V_{(n-3)}
\oplus (V_{(4)}\oplus V_{(3,1)}\oplus  V_{(2,2)}^2\oplus V_{(2,1,1)}\oplus V_{(1^4)})\otimes V_{(n-4)}$.
\end{enumerate}
\end{Proposition}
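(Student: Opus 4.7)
The plan is to reduce both parts to the oduns of the small essential forests computed in Example~\ref{E:n=3} by invoking Theorem~\ref{T:add a regular representation} and Lemma~\ref{L:odun}. The key observation is that, by Lemma~\ref{L:odun}, $\mc{C}_{k,n}$ decomposes as an $S_n$-module into a direct sum indexed by the unlabeled rooted forests on $n$ vertices with exactly $k$ edges; each summand is obtained from the corresponding essential (isolated-vertex-free) odun by adjoining the appropriate number of isolated vertices, an operation that Theorem~\ref{T:add a regular representation} implements by multiplication with the trivial character of the symmetric group on the added vertices. Throughout, I use the Frobenius characteristic map to convert these character products into symmetric-function products and expand via Pieri.

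For part (i), the constraint that $f$ has a single edge pins the underlying unlabeled rooted forest down to exactly one shape: one rooted tree on two vertices together with $n-2$ isolated vertices. The odun of a 2-vertex tree was computed in the case study of Section~\ref{S:case study} and equals $V_{(2)} \oplus V_{(1,1)}$. Theorem~\ref{T:add a regular representation} with $k = n-2$ then gives
\begin{equation*}
\chi^{\mc{C}_{1,n}} \;=\; \chi^{(n-2)} \bigl( \chi^{(2)} + \chi^{(1,1)} \bigr),
\end{equation*}
and pushing the product through the Frobenius characteristic map turns it into $h_{n-2}(s_{(2)} + s_{(1,1)})$. Two applications of the Pieri rule expand this to $s_{(n)} + 2\, s_{(n-1,1)} + s_{(n-2,2)} + s_{(n-2,1,1)}$, which is the claimed decomposition.

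For part (ii), I first enumerate the essential rooted-forest shapes with exactly two edges: the chain on three vertices, the V-shape on three vertices (one root with two children), and the pair of disjoint single edges on four vertices. Padding respectively by $n-3$, $n-3$, and $n-4$ isolated vertices exhausts all unlabeled rooted forests on $n$ vertices with two edges. Example~\ref{E:n=3} records the oduns of these three essential pieces, and Theorem~\ref{T:add a regular representation} promotes each one to an $S_n$-representation by multiplication with $\chi^{(n-3)}$ or $\chi^{(n-4)}$; the tensor-product notation in the proposition is shorthand for this induction product (i.e.\ multiplication by $h_{n-3}$ or $h_{n-4}$ under Frobenius characteristic). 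Summing the three contributions produces the stated formula. The substantive work has already been done in Example~\ref{E:n=3} and Theorem~\ref{T:add a regular representation}, so the remaining proof is essentially bookkeeping; the only real pitfall is confirming that the three listed shapes are exhaustive, which reduces to a short case analysis on how two edges can be distributed among the connected components of a rooted forest.
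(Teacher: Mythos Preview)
Your proof is correct and follows essentially the same route as the paper: identify the underlying unlabeled forest shapes (one for $k=1$, three for $k=2$), invoke Theorem~\ref{T:add a regular representation} to tensor the essential oduns from Example~\ref{E:n=3} (and the $n=2$ case study) with the trivial representation on the isolated vertices, and read off the result. The only addition you make is the explicit Pieri expansion in part~(i), which the paper leaves implicit.
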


\begin{proof}
(i) 
A nilpotent 1 file placement from $\mc{C}_{1,n}$ is a sequence $f=[f_1,\dots, f_n]$ of length $n$ with a unique non-zero entry 
$f_i =j \in [n]$ such that $i\neq j$. As a labeled rooted tree $\sigma=\sigma_f$ is an array of $n$ vertices labeled from 1 to $n$,
and the $i$th vertex is connected to the $j$th by a directed edge. See Figure~\ref{F:C1n}.
\begin{figure}[htp]
\centering
\begin{tikzpicture}[scale=.75,cap=round,>=latex,distance=2cm, thick]

\node[inner sep =0.5, circle,draw] (1) at (-3,0) {1};
\node[inner sep =0.5, circle,draw] (2) at (-2,0) {2};
\node (3) at (-1,0) {$\dots$};
\node[inner sep =0.5, circle,draw] (4) at (0,0) {$i$};
\node (5) at (1,0) {$\dots$};
\node[inner sep =0.5, circle,draw] (6) at (2,0) {$j$};
\node (7) at (3,0) {$\dots$};
\node[inner sep =0.5, circle,draw] (8) at (4,0) {$n$};

\draw[->] (4) to [out = 45, in =135,distance = 1cm, looseness=.5] (6);

\end{tikzpicture}
\caption{The labeled rooted forest form of a typical element of $\mc{C}_{1,n}$.}
\label{F:C1n}
\end{figure}
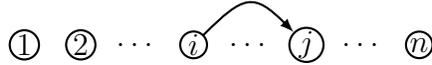
The rest of the proof follows from Theorem~\ref{T:add a regular representation}.

(ii) 
It is easy to verify that the underlying forest of an element $\sigma\in \mc{C}_{2,n}$ is one of the 
three forests which are depicted in Figure~\ref{F:oduns of 2,n}.
\begin{figure}[htp]
\centering
\begin{tikzpicture}[scale=.5]
\begin{scope}[yshift=3.5cm]
\node (0) at (-2,0) {$1.$};
\node (1) at (0,-1) {$\bullet$};
\node (2) at (0,0) {$\bullet$};
\node (3) at (0,1) {$\bullet$};
\node (4) at (.75,0) {$\bullet$};
\node (5) at (2,0) {$\cdots$};
\node (6) at (3,0) {$\bullet$};
\node (7) at (9.5,0) {($n-3$ isolated vertices)};
\draw[-, thick] (0,-1) to  (0,1);
\end{scope}
\begin{scope}[yshift=0cm]
\node (0) at (-2,0) {$2.$};
\node (1) at (0,0) {$\bullet$};
\node (2) at (.5,1) {$\bullet$};
\node (3) at (1,0) {$\bullet$};
\node (4) at (1.75,0) {$\bullet$};
\node (5) at (3,0) {$\cdots$};
\node (6) at (4,0) {$\bullet$};
\node (7) at (9.5,0) {($n-3$ isolated vertices)};
\draw[-, thick] (0,0) to  (0.5,1);
\draw[-, thick] (1,0) to  (0.5,1);
\end{scope}
\begin{scope}[yshift=-3.5cm]
\node (0) at (-2,0) {$3.$};
\node (1) at (0,0) {$\bullet$};
\node (2) at (0,1) {$\bullet$};
\node (3) at (1,0) {$\bullet$};
\node (4) at (1,1) {$\bullet$};
\node (5) at (1.75,0) {$\bullet$};
\node (6) at (3,0) {$\cdots$};
\node (7) at (4,0) {$\bullet$};
\node (8) at (9.5,0) {($n-4$ isolated vertices)};
\draw[-, thick] (0,0) to  (0,1);
\draw[-, thick] (1,0) to  (1,1);
\end{scope}
\end{tikzpicture}
\caption{The oduns of $\mc{C}_{2,n}$.}
\label{F:oduns of 2,n}
\end{figure}
Thus, the proof follows from Theorem~\ref{T:add a regular representation} in the view of Example~\ref{E:n=3}.

\end{proof}

\section{Plethysm and labeled rooted trees.}\label{S:Plethysm}

We start with some preliminary observations. 
First of all, since a labeled rooted tree $\sigma$ is obtained from a collection $\sigma_1,\dots, \sigma_r$ 
of rooted subforests by adding a vertex attached to all of their roots, we know from Theorem~\ref{T:add a vertex}
that $\chi^{o(\sigma)} = \chi^{\md{1}}\prod_{i}^r \chi^{o(\sigma_i)}$. In the light of Remark~\ref{R:more general}, 
we assume that all subforests are distinct in the sense that, if a tree $\tau$ is a connected component in $\sigma_i$,
then (an isomorphic copy of) $\tau$ does not appear in any other subforest $\sigma_j$ as a connected component. 
Therefore, for our purposes it suffices to investigate the representation of $S_{rm}$ 
on a forest which is comprised of $r$ copies of the same rooted tree on $m$ vertices.

\begin{Theorem}\label{T:master plethysm theorem}
Let $\sigma \in \mc{C}_n$ be a labeled rooted forest consisting of $r$ copies of 
the same labeled rooted tree $\tau$. In this case, the representation $o(\sigma)$ is equal to 
the composition product (plethystic substitution) of $\chi^{o(\tau)}$ with $\chi^{(r)}$. In other words,
\begin{align}
\chi^{o(\sigma)} = \chi^{(r)} \circ \chi^{o(\tau)}.
\end{align}
\end{Theorem}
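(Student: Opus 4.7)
The plan is to identify the orbit module $o(\sigma)$ with an induced module coming from the stabilizer of a chosen labeling, and then to match this against the classical wreath-product description of the plethysm $\chi^{(r)} \circ \chi^{o(\tau)}$.

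Write $n = rm$ and fix a distinguished labeling $\sigma_0 \in o(\sigma)$. The first step is to determine the stabilizer $H := \mt{Stab}_{S_{rm}}(\sigma_0)$. A permutation fixing the labeled forest $\sigma_0$ must carry the label-set of each tree-copy onto the label-set of some (possibly different) tree-copy while respecting the rooted tree structure. Letting $K \leq S_m$ denote the automorphism group of $\tau$ as a rooted tree acting on its vertex set $[m]$, this decomposition identifies $H$ with the wreath product $K \wr S_r$, which sits naturally inside $S_m \wr S_r \leq S_{rm}$, the latter being the stabilizer of the partition of $[rm]$ induced by $\sigma_0$.

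Since Lemma~\ref{L:odun} shows that $S_{rm}$ acts transitively on $o(\sigma)$, the standard correspondence between transitive permutation modules and induced modules gives $o(\sigma) \cong \mt{Ind}_H^{S_{rm}} \mathds{1}$. Using transitivity of induction through the intermediate subgroup $S_m \wr S_r$, I would rewrite this as
\begin{align*}
o(\sigma) \;\cong\; \mt{Ind}_{S_m \wr S_r}^{S_{rm}} \bigl( \mt{Ind}_{K \wr S_r}^{S_m \wr S_r} \mathds{1} \bigr).
\end{align*}
The inner induction is a wreath tensor power: the cosets of $K \wr S_r$ inside $S_m \wr S_r$ are in bijection with $(S_m/K)^r$, and the $S_m \wr S_r$-action decomposes as the factorwise $S_m^r$-action together with the permutation action of $S_r$ on tensor slots, so the inner induced module is precisely $o(\tau)^{\otimes r}$ with $S_r$ acting by tensor factor permutation (without sign).

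Finally I would invoke the wreath-product realisation of plethysm: for any character $\psi$ of $S_m$ and any $\lambda \vdash r$,
\begin{align*}
\chi^\lambda \circ \psi \;=\; \mt{Ind}_{S_m \wr S_r}^{S_{rm}} \bigl( \psi^{\otimes r} \otimes \mt{Infl}(\chi^\lambda) \bigr),
\end{align*}
where $\mt{Infl}(\chi^\lambda)$ is inflated along the quotient $S_m \wr S_r \twoheadrightarrow S_r$. Specialising to $\lambda = (r)$ makes $\mt{Infl}(\chi^\lambda)$ trivial and the tensor factor disappears, matching the expression for $o(\sigma)$ obtained above. The most delicate step will be the stabilizer computation: one must verify carefully that every $\pi \in S_{rm}$ fixing $\sigma_0$ decomposes as a permutation of the $r$ block-labels composed with within-block rooted-tree automorphisms, which requires tracing how the underlying digraph of $\sigma_0$ constrains $\pi$. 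As a sanity check I would confirm the formula for $\tau$ a single vertex (giving $\chi^{(r)} \circ \chi^{(1)} = \chi^{(r)}$, matching the trivial representation on $r$ isolated vertices) and for $\tau$ a single edge, where $K$ is trivial and the wreath cosets can be enumerated by hand.
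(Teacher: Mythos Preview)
Your argument is correct and follows essentially the same route as the paper: both identify $o(\sigma)$ with an induction from the wreath product $S_m \wr S_r$ to $S_{rm}$, and then invoke the standard wreath-product realisation of plethysm (Macdonald, Appendix~A). The paper's version is terser---it asserts directly that the $S_m\wr S_r$-module structure on (an orbit inside) $o(\sigma)$ is the wreath tensor power $o(\tau)^{\widetilde{\otimes} r}$ and then induces---whereas you unpack this by first computing $\mt{Stab}_{S_{rm}}(\sigma_0)=K\wr S_r$ and then factoring the induction through $S_m\wr S_r$; but the content is the same.
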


\begin{proof}
Let $m$ denote the number of vertices in $\tau$. Since $\sigma$ has $n$ vertices and 
it contains $r$ copies of $\tau$, $n=mr$. Observe that the wreath product $S_r \wr S_m$ (the 
normalizer of $S_m \times \cdots \times S_m$ ($r$-copies) in $S_n$) acts on $o(\sigma)$
as follows: $S_r$ acts on $S_r \wr S_m$ by permuting the connected blocks (copies of $\tau$)
and $S_m$ acts on individual labeled rooted trees. Let us denote this representation of $S_r \wr S_m$ by $W$. 
Our $S_n$-representation is nothing but the induction of $W$ from $S_r \wr S_m$ to $S_n$. 
It is well-known that the character of such a representation is given by the plethysm
of the corresponding characters. (See Macdonald~\cite{Mac} Appendix A.)
Since the character of permutation representation of $S_r$ on $r$ letters is $\chi^{(r)}$, the proof is complete.

\end{proof}

\begin{Corollary}\label{C:character of a tree representation}
Let $\lambda=(\lambda_1,\dots, \lambda_r)$ be a partition of $n$, and let $\sigma$ be a labeled rooted forest 
comprised of $\lambda_1$ copies of labeled rooted tree $\sigma_1$, $\lambda_2$ copies of labeled rooted tree 
$\sigma_2$, and so on. If the oduns (the underlying trees) of $\sigma_i$'s ($i=1,2,\dots$) are all different from each other, 
then character of $S_n$-module $o(\sigma)$ is given by 
\begin{align}
\chi^{o(\sigma)} = (\chi^{(\lambda_1)} \circ \chi^{o(\sigma_1)}) \cdot (\chi^{(\lambda_2)} \circ \chi^{o(\sigma_2)})\cdot \cdots \cdot
(\chi^{(\lambda_r)} \circ \chi^{o(\sigma_r)}).
\end{align}
\end{Corollary}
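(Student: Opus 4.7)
The plan is to deduce this corollary by combining Theorem~\ref{T:master plethysm theorem} with Remark~\ref{R:more general}, after partitioning the forest $\sigma$ according to isomorphism type of its connected components. First, I would decompose $\sigma$ as a disjoint union $\sigma = \sigma^{(1)} \sqcup \sigma^{(2)} \sqcup \cdots \sqcup \sigma^{(r)}$, where $\sigma^{(i)}$ denotes the sub-forest consisting of all $\lambda_i$ copies of $\sigma_i$. Because the oduns of the $\sigma_i$'s are pairwise distinct, no connected component of $\sigma^{(i)}$ is isomorphic to any connected component of $\sigma^{(j)}$ when $i \neq j$; this is exactly the hypothesis required for Remark~\ref{R:more general}.

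Next, I would apply Remark~\ref{R:more general} iteratively (a straightforward induction on $r$, since the remark is stated for two pieces but extends mechanically to any finite disjoint union whose pieces are pairwise component-disjoint). This yields
\begin{align*}
\chi^{o(\sigma)} = \chi^{o(\sigma^{(1)})} \cdot \chi^{o(\sigma^{(2)})} \cdots \chi^{o(\sigma^{(r)})}.
\end{align*}
It remains to identify each factor $\chi^{o(\sigma^{(i)})}$. Here Theorem~\ref{T:master plethysm theorem} applies directly: $\sigma^{(i)}$ is a labeled rooted forest consisting of $\lambda_i$ copies of the single labeled rooted tree $\sigma_i$, so
\begin{align*}
\chi^{o(\sigma^{(i)})} = \chi^{(\lambda_i)} \circ \chi^{o(\sigma_i)}.
\end{align*}
Substituting these identities into the product gives precisely the claimed formula.

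The only subtlety I anticipate is verifying the iterated form of Remark~\ref{R:more general}. To be careful, I would phrase the induction as follows: assuming the product formula holds for the union of the first $k$ pieces $\sigma^{(1)} \sqcup \cdots \sqcup \sigma^{(k)}$, one applies Remark~\ref{R:more general} to the pair consisting of this union and the next piece $\sigma^{(k+1)}$; the distinct-oduns hypothesis guarantees that no connected component is shared between them, so the remark applies and the product extends by one more factor. Aside from this bookkeeping, the argument is purely a matter of assembling the already-proved pieces, so no new combinatorial input is required.
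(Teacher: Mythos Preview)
Your proposal is correct and follows essentially the same route as the paper: both combine Remark~\ref{R:more general} (to split $\sigma$ into the pieces $\sigma^{(i)}$) with Theorem~\ref{T:master plethysm theorem} (to identify each $\chi^{o(\sigma^{(i)})}$ as a plethysm). The only difference in emphasis is that the paper singles out the edge case $\lambda_i=1$ (where $\chi^{(1)}\circ\chi^{o(\sigma_i)}=\chi^{o(\sigma_i)}$), whereas you instead spell out the routine induction extending Remark~\ref{R:more general} from two pieces to $r$ pieces.
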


\begin{proof}
The only point that we have to be careful is when $m_i =1$ for some $i$. But $\chi^{\md{1}} \circ \chi^\lambda= \chi^\lambda$
for any character $\chi^\lambda$. The rest of the proof follows from Theorem~\ref{T:master plethysm theorem} and Remark~\ref{R:more general}.
\end{proof}



Given $n$, a nonnegative integer, a sequence of positive numbers $(\gamma_1,\dots, \gamma_r)$ is called a 
composition of $n$ if $\sum_{i=1}^r \gamma_i = n$. In the next corollary, the symbol $\models$ stands for the ``composition of''.

\begin{Corollary}
The character of the $k$-file placements $\mc{C}_{k,n}$ is given by 
\begin{align}
\chi^{k,n} 
= \sum_{(m_1,\dots, m_r)\models n -k}\sum_{(o_1,\dots,o_r)} (\chi^{(m_1)} \circ \chi^{o_1}) \cdot (\chi^{(m_2)} \circ \chi^{o_2})\cdot \cdots \cdot (\chi^{(m_r)} \circ \chi^{o_r}),
\end{align}
where the second summation is over all tuples of distinct rooted trees such that $\sum m_i | o_i | = n$.
\end{Corollary}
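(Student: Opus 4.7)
The plan is to decompose $\mc{C}_{k,n}$ into a direct sum of $S_n$-submodules indexed by its $S_n$-orbits, and then to apply Corollary~\ref{C:character of a tree representation} to each orbit individually. Since characters are additive on direct sums, summing the individual contributions will yield the stated formula.

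First I would invoke the bijection from the proof of Theorem~\ref{T:number of k files} to identify each element of $\mc{C}_{k,n}$ with a labeled rooted forest on $n$ vertices having exactly $n-k$ connected components. By Lemma~\ref{L:odun}, the $S_n$-orbits on $\mc{C}_{k,n}$ are then in bijection with unlabeled rooted forests on $n$ vertices with $n-k$ components, and each orbit $S_n\cdot \sigma$ realizes the odun $o(\sigma)$. Hence
\[
\chi^{k,n} \;=\; \sum_{\sigma} \chi^{o(\sigma)},
\]
where $\sigma$ runs over one representative from each such orbit.

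Next I would parameterize these forest representatives by distinct--tree data. Every unlabeled rooted forest $\sigma$ on $n$ vertices with $n-k$ components can be written uniquely (up to reordering the index set) as a disjoint union of $m_i$ copies of each of $r$ pairwise non-isomorphic unlabeled rooted trees $o_1,\dots,o_r$, where the multiplicities and sizes satisfy the twin constraints
\[
m_1+\cdots+m_r \;=\; n-k, \qquad m_1|o_1|+\cdots+m_r|o_r| \;=\; n.
\]
Corollary~\ref{C:character of a tree representation} then gives
\[
\chi^{o(\sigma)} \;=\; \bigl(\chi^{(m_1)}\circ \chi^{o_1}\bigr)\cdot\bigl(\chi^{(m_2)}\circ \chi^{o_2}\bigr)\cdots\bigl(\chi^{(m_r)}\circ \chi^{o_r}\bigr),
\]
so substituting into the sum above gives exactly the formula in the corollary, provided the outer summation is interpreted so that each unlabeled forest is counted exactly once.

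The main obstacle, and in fact the only subtlety, is the bookkeeping of this last parameterization: writing $(m_1,\dots,m_r)$ as an ordered composition of $n-k$ rather than a partition seemingly over-counts each forest by the number of permutations of the index set that preserve the pairs $(m_i, o_i)$. I would resolve this either by fixing, once and for all, a total order on isomorphism classes of rooted trees and requiring $o_1<o_2<\cdots<o_r$ inside the inner sum, or equivalently by declaring that the pair $\bigl((m_1,\dots,m_r),(o_1,\dots,o_r)\bigr)$ is counted as a single datum up to simultaneous reindexing. With this convention, the orbit decomposition yields the formula exactly; no further computation is needed, as the plethystic factorization of each $\chi^{o(\sigma)}$ has already been established in Corollary~\ref{C:character of a tree representation}.
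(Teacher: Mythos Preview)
Your proposal is correct and follows exactly the paper's approach: the paper's own proof is the single line ``Follows from Corollary~\ref{C:character of a tree representation},'' and you have simply unpacked this by decomposing $\mc{C}_{k,n}$ into oduns via Lemma~\ref{L:odun} and applying Corollary~\ref{C:character of a tree representation} to each. Your discussion of the composition-versus-partition bookkeeping is a genuine clarification of an ambiguity in the statement that the paper leaves implicit.
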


\begin{proof}
Follows from Corollary~\ref{C:character of a tree representation}.
\end{proof}

\section{The sign representation}\label{S:Sign}

In this section we compute the multiplicity of the sign representation in a forest representation.  
Surprisingly, asymmetry in the underlying rooted tree is a source of regularity in the associated representation.

Let $o=o(\sigma)$ be the odun of a labeled rooted tree and let $F_o$ denote the corresponding Frobenius characteristic. 
Removing the root from $o$ gives a forest $\tau$. 
Let $o_1,\dots, o_r$ be the list of distinct connected components (subtrees) of $\tau$.
By Corollary~\ref{C:character of a tree representation} we see that 
\begin{align}\label{A:Fo}
F_o = s_{(1)} \cdot s_{(i_1)}\circ F_{o_1} \cdot  s_{(i_2)} \circ F_{o_2} \cdot \cdots \cdot s_{(i_r)} \circ F_{o_r},
\end{align}
where $i_1,\dots, i_r$ are the multiplicities of the subtrees $o_1,\dots, o_r$ in the listed order. 
Let us denote by $F_\tau$, the symmetric function $h=F_o/s_{(1)}=\prod_{j=1}^r s_{(i_j)}\circ F_{o_j}$.
(Thus, $F_\tau$ is the Frobenius characteristic of the odun of the forest that is obtained from $o$ by removing its root.)
Determining the full decomposition of $F_o$ into Schur polynomials seems to be difficult because of plethysms.
In this section, we are going to focus on computing the coefficient of $s_{(1^m)}$ in $F_o$ only. 
Our basic observation is that $s_{(1^m)}$ can occur in $F_o$ only if $s_{(1^{k_i})}$ occurs 
in $s_{(i_j)}\circ F_{o_j}$ for $i=j,\dots, r$. Therefore, we are going to focus initially on the multiplicity of 
$s_{(1^m)}$ in $s_{(k)}\circ F_o$. Let us first compute $\langle s_{(1^m)}, s_\lambda \circ F_o \rangle$ 
for some partition $\lambda$.

We start with a more general formula.
\begin{align}
s_{\lambda}\circ F_o &= s_{\lambda}\circ (p_1 h) \qquad (\mt{since $F_o=p_1h$ for some } h=F_\tau\in \Lambda) \notag \\
&=  \sum_{\mu,\nu} \gamma_{\mu,\nu}^{\lambda} (s_\mu \circ p_1) (s_\nu \circ h) \qquad (\mt{by}~(\ref{A:plethysm formula 2})) \notag \\
&=  \sum_{\mu,\nu} \gamma_{\mu,\nu}^{\lambda} s_\mu (s_\nu \circ h) \qquad (\mt{by the axioms of plethysms}). \label{A:by the axioms}
\end{align}

Recall that  $\gamma^\lambda_{\mu,\nu}= \frac{1}{k!} \langle \chi^\lambda, \chi^\mu \chi^\nu \rangle$.
On one hand, whenever $\lambda = (k),\ \mu=(1^k)$ we have 
\begin{align*}
\gamma_{(1^k),\nu}^{(k)} &=\frac{1}{k!} \sum_{w\in S_k}  \chi^{(k)}(w) \chi^{(1^k)}(w) \chi^\nu(w) \\
&=\frac{1}{k!} \sum_{w\in S_k} \chi^{(1^k)}(w) \chi^\nu(w) \qquad (\mt{since $\chi^{(k)}(w) = 1$ for all $w\in S_k$})\\
&=\langle \chi^{(1^k)} , \chi^\nu \rangle \\
&=
\begin{cases}
0 & \mt{if } \nu \neq (1^k),\\
1 & \mt{if } \nu = (1^k).
\end{cases}
\end{align*}
By the same token, but more generally we have 
\begin{align}\label{A:token}
s_{(k)} \circ F_o = \sum_{\mu}  s_\mu (s_\mu \circ h).
\end{align}
Thus, in~(\ref{A:token}) the sign representation $s_{(1^m)}$ occurs if and only if $\mu=(1^k)$ and $s_{(1^{m-k})}$ 
is a summand of $s_\nu \circ h$. Therefore, 
\begin{align}\label{A:an observation}
\langle s_{\lambda}\circ F_o, s_{(1^m)} \rangle = \langle s_{(1^{m-k})}, \sum_{\nu} \gamma_{(1^k),\nu}^{\lambda} s_\nu \circ h \rangle.
\end{align}

On the other hand, $\lambda = (1^k)$ implies that 
\begin{align*}
\gamma_{(1^k),\nu}^{(1^k)} &=\frac{1}{k!} \sum_{w\in S_k}  \chi^{(1^k)}(w) \chi^{(1^k)}(w) \chi^\nu(w) \\
&=\frac{1}{k!} \sum_{w\in S_k}  \chi^\nu(w) \qquad (\mt{since $\chi^{(1^k)}(w) = \pm1$ for all $w\in S_k$})\\
&=
\begin{cases}
0 & \mt{if } \nu \neq (k),\\
1 & \mt{if } \nu = (k).
\end{cases}
\end{align*}
More generally, by using the same idea we obtain $s_{(1^k)} \circ F_o = \sum_{\mu \vdash k}  s_{\mu'} (s_\mu \circ h)$.
The reasoning which we used right after equation~(\ref{A:by the axioms}) gives more: 
\begin{align*}
\langle s_{(1^m)}, s_{\lambda}\circ F_o \rangle &= \langle s_{(1^m)}, \sum_{\mu,\nu} \gamma_{\mu,\nu}^{\lambda} s_\mu (s_\nu \circ h) \rangle \\
&= \langle s_{(1^{m})}, \sum_{\nu} \gamma_{(1^k),\nu}^{\lambda} s_{(1^k)} (s_\nu \circ h) \rangle \qquad \mt{($k$ is equal to $|\lambda|$})\\
&= \langle s_{(1^{m})}, \sum_{\nu} \gamma_{\lambda,\nu}^{(1^k)} s_{(1^k)} (s_\nu \circ h) \rangle \\
&= \langle s_{(1^{m})}, s_{(1^k)} (s_{\lambda'} \circ h) \rangle \\
&= \langle s_{(1^{m-k})}, s_{\lambda'} \circ h \rangle \\
\end{align*}

In conclusion, we have the following `simplification/duality' result:
\begin{Lemma}\label{L:step 1}
Let $F_o$ denote the Frobenius characteristic of an odun $o$ of a rooted tree on $n$ vertices, and let $h=F_\tau$ denote 
the Frobenius characteristic of the odun of the rooted forest on $n-1$ vertices obtained from $o$ by removing its root. In this case, we have
\begin{enumerate}
\item $s_{(k)} \circ F_o = \sum_{\mu \vdash k}  s_\mu (s_\mu \circ h)$,
\item $s_{(1^k)} \circ F_o = \sum_{\mu \vdash k}  s_{\mu'} (s_\mu \circ h)$,
\item $\langle s_{(1^m)}, s_{\lambda}\circ F_o \rangle = \langle s_{(1^{m-k})}, s_{\lambda'} \circ h \rangle$.
\end{enumerate}
In particular, the following equations hold true
\begin{enumerate}
\item[3.1] $\langle s_{(1^m)} , s_{(k)} \circ F_o \rangle = \langle s_{(1^m)}, s_{(1^k)} (s_{(1^{k})}\circ h) \rangle
= \langle s_{(1^{m-k})}, s_{(1^{k})}\circ F_\tau \rangle$,
\item[3.2] $ \langle s_{(1^m)} , s_{(1^k)} \circ F_o \rangle = \langle s_{(1^m)}, s_{(1^k)} (s_{(k)}\circ h) \rangle
= \langle s_{(1^{m-k})}, s_{(k)}\circ F_\tau \rangle$.
\end{enumerate}
\end{Lemma}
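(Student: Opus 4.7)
The plan is to reduce everything to the factorization $F_o = p_1 \cdot F_\tau$. This factorization comes from Theorem~\ref{T:add a vertex}: the odun $o$ is obtained from the forest $\tau$ by attaching a new root, so its character is multiplied by that of the trivial $S_1$-representation, whose Frobenius image is $s_{(1)}=p_1$. Writing $h := F_\tau$, parts~(1) and~(2) then fall out immediately from the formulas~(\ref{A:complete}) and~(\ref{A:elementary}) applied to $s_{(k)}\circ (p_1 h)$ and $s_{(1^k)}\circ (p_1 h)$, combined with the identity $s_\mu \circ p_1 = s_\mu$ (which follows from axioms~P2 and~P3 of plethysm together with $p_\rho \circ p_1 = p_\rho$). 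For~(2) a harmless reindexing $\mu \leftrightarrow \mu'$ converts the resulting $\sum_\mu s_\mu(s_{\mu'}\circ h)$ into the claimed $\sum_\mu s_{\mu'}(s_\mu\circ h)$.

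For~(3), I would apply the general plethysm formula~(\ref{A:plethysm formula 2}) to obtain
\[
s_\lambda \circ F_o \;=\; \sum_{\mu,\nu \vdash k} \gamma_{\mu,\nu}^\lambda\, s_\mu\,(s_\nu \circ h), \qquad k := |\lambda|,
\]
and then take the Hall inner product against $s_{(1^m)}$. The key observation is the skew-Schur adjunction $\langle s_{(1^m)}, s_\mu \cdot f\rangle = \langle s_{(1^m)/\mu}, f\rangle$: since $s_{(1^m)/\mu}$ vanishes unless $\mu$ is a single column, the only surviving contribution comes from $\mu = (1^k)$, where $s_{(1^m)/(1^k)} = s_{(1^{m-k})}$. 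This collapses the double sum to
\[
\langle s_{(1^m)}, s_\lambda \circ F_o \rangle \;=\; \sum_{\nu \vdash k} \gamma_{(1^k),\nu}^\lambda \,\langle s_{(1^{m-k})}, s_\nu \circ h \rangle.
\]

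The remaining step is to evaluate the Kronecker coefficient $\gamma_{(1^k),\nu}^\lambda$. Since $\chi^{(1^k)}$ is the sign character and tensoring by sign sends $\chi^\lambda$ to $\chi^{\lambda'}$, one has $\gamma_{(1^k),\nu}^\lambda = \langle \chi^{\lambda'}, \chi^\nu \rangle = \delta_{\lambda',\nu}$, and only the term $\nu = \lambda'$ survives, yielding the claimed identity $\langle s_{(1^m)}, s_\lambda \circ F_o\rangle = \langle s_{(1^{m-k})}, s_{\lambda'} \circ h\rangle$. The two specializations~3.1 and~3.2 are then instant, obtained by setting $\lambda = (k)$ and $\lambda = (1^k)$ respectively and recalling $h = F_\tau$.

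The argument is essentially mechanical; the two substantive ingredients are the skew-Schur adjunction (equivalently, the Pieri rule for multiplication by a column Schur function) and the self-duality of the sign character under tensor product. I do not foresee a real obstacle, though some care is needed to verify that the sum in~(\ref{A:plethysm formula 2}) is indeed restricted to $|\mu|=|\nu|=|\lambda|$, so that the inner-product reduction is meaningful.
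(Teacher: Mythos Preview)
Your proposal is correct and follows essentially the same route as the paper: both start from $F_o = p_1 h$, use~(\ref{A:plethysm formula 2}) (or its special cases~(\ref{A:complete}),~(\ref{A:elementary})) together with $s_\mu\circ p_1 = s_\mu$, and then isolate the $\mu=(1^k)$ term via the Pieri/adjunction argument and the identity $\gamma_{(1^k),\nu}^\lambda=\delta_{\nu,\lambda'}$. The only cosmetic difference is that the paper computes the relevant Kronecker coefficients by hand rather than invoking the ``tensor with sign'' fact, and reaches $\gamma_{(1^k),\nu}^\lambda=\delta_{\nu,\lambda'}$ through the symmetry $\gamma_{(1^k),\nu}^\lambda=\gamma_{\lambda,\nu}^{(1^k)}$.
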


\vspace{1cm}

As far as the sign representation is concerned, we have the following crucial definition.
\begin{Definition}\label{D:blossoming}
A (rooted) subtree $a$ of a rooted tree is called a {\em terminal branch} (TB for short) if any vertex of $a$
has at most 1 successor. A {\em maximal terminal branch} (or, MTB for short) is a terminal branch that is not a subtree
of any terminal branch other than itself. 
The length (or height) of a TB is the number of vertices it has. 
We call a rooted tree {\em blossoming} if all of its MTB's are of even length, 
or no two odd length MTB's of the same length are connected to the same parent. 
A rooted tree which is not blossoming is called {\em dry}. 
\end{Definition}

\begin{Lemma}\label{L:step 2}
If an odun $o$ is an MTB of length $l$, then for any $m\geq 2$
$$
\langle s_{(m)}\circ F_o, s_{(1^{ml})} \rangle = 
\begin{cases}
1 & \mt{ if $l$ is an even number};\\
0 & \mt{ otherwise.} 
\end{cases}
$$
\end{Lemma}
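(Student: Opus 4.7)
The plan is to first identify the symmetric function $F_o$ explicitly, and then iterate Lemma~\ref{L:step 1} to reduce the computation of $\langle s_{(m)}\circ F_o,\ s_{(1^{ml})}\rangle$ to one of two trivial base cases selected by the parity of $l$.

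The first step is to observe that when $o$ is a rooted chain of length $l$, repeated application of Theorem~\ref{T:add a vertex} (removing the root of a chain of length $l$ yields a chain of length $l-1$) gives
$$F_o = s_{(1)}^l = p_1^l.$$
Equivalently, the odun of a chain of length $l$ is the regular representation of $S_l$; as a sanity check, its dimension $l!$ matches the number of labelings of a chain on $l$ vertices.

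Armed with $F_o = p_1^l$, write $F_o = s_{(1)} \cdot p_1^{l-1}$ so that the ``sub-forest'' Frobenius appearing in Lemma~\ref{L:step 1} is $F_\tau = p_1^{l-1}$, and apply part 3.1 with $k = m$ to obtain
\begin{align*}
\langle s_{(1^{ml})},\ s_{(m)} \circ p_1^l \rangle &= \langle s_{(1^{m(l-1)})},\ s_{(1^m)} \circ p_1^{l-1} \rangle.
\end{align*}
When $l \geq 2$ the right-hand side has the same structural form after swapping the roles of $s_{(m)}$ and $s_{(1^m)}$, so part 3.2 of the same lemma applies and yields
\begin{align*}
\langle s_{(1^{m(l-1)})},\ s_{(1^m)} \circ p_1^{l-1} \rangle &= \langle s_{(1^{m(l-2)})},\ s_{(m)} \circ p_1^{l-2} \rangle.
\end{align*}
Each such double application strips two vertices off the chain while returning the pairing to exactly the original shape. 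Iterating terminates at $l = 1$ when $l$ is odd, where $\langle s_{(m)} \circ p_1,\ s_{(1^m)}\rangle = \langle s_{(m)},\ s_{(1^m)}\rangle = 0$ (and this is precisely where the hypothesis $m \geq 2$ enters); and at $l = 2$ when $l$ is even, where a single additional application of part 3.1 reduces the pairing to $\langle s_{(1^m)},\ s_{(1^m)}\circ p_1\rangle = \langle s_{(1^m)},\ s_{(1^m)}\rangle = 1$.

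The only conceptually nontrivial step is the first one, the identification $F_o = p_1^l$; all subsequent work is a mechanical double-step induction driven entirely by Lemma~\ref{L:step 1}. The hypothesis $m \geq 2$ plays no role in the recursion itself and is used only at the very end, to ensure the vanishing $\langle s_{(m)}, s_{(1^m)}\rangle = 0$ in the odd-$l$ base case.
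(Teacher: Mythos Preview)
Your proof is correct and follows essentially the same approach as the paper's: the paper's proof simply notes that removing the root from an MTB yields an MTB of length one less and invokes Lemma~\ref{L:step 1} together with the plethysm axioms, which is exactly your alternating application of parts~3.1 and~3.2 to step the chain length down. Your explicit identification $F_o = p_1^l$ and careful handling of the two base cases $l=1$ and $l=2$ (including the observation that $m\geq 2$ is needed only for $\langle s_{(m)}, s_{(1^m)}\rangle = 0$) spell out in full what the paper leaves implicit.
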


\begin{proof}
Observe that removing the root from an MTB results in another MTB whose length is one less than the original's. 
The rest of the proof follows from Lemma~\ref{L:step 1} and axiomatic properties of plethysm. 
\end{proof}

We proceed with extending our Definition~\ref{D:blossoming} to forests. 
\begin{Definition}
We call a rooted forest blossoming if the rooted tree obtained by adding a new root 
to the forest is a blossoming tree. Otherwise, the forest is called dry. 
\end{Definition}
Note that if a forest is blossoming, then all of its connected components are blossoming. 
Also, if a single connected component is dry, then the whole forest is dry. 
In Figure~\ref{F:blossoming} we have listed all blossoming forests 
up to $4$ vertices.

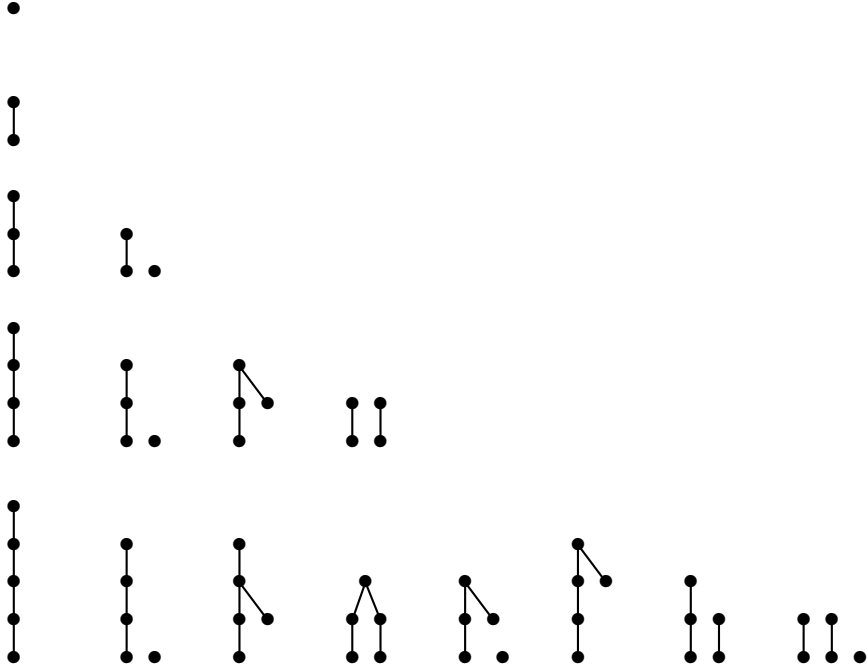
\begin{figure}[htp]
\centering
\begin{tikzpicture}[scale=.5]

\begin{scope}[yshift=3.5cm]
\node at (0,0) {$\bullet$};
\end{scope}

\begin{scope}[yshift=0cm]
\node at (0,0) {$\bullet$};
\node at (0,1) {$\bullet$};
\draw[-, thick] (0,0) to  (0,1);
\end{scope}

\begin{scope}[yshift=-3.5cm]
\node at (0,0) {$\bullet$};
\node at (0,1) {$\bullet$};
\node at (0,2) {$\bullet$};
\node at (3,0) {$\bullet$};
\node at (3,1) {$\bullet$};
\node at (3.75,0) {$\bullet$};
\draw[-, thick] (0,0) to  (0,2);
\draw[-, thick] (3,0) to  (3,1);
\end{scope}

\begin{scope}[yshift=-8cm]
\node at (0,0) {$\bullet$};
\node at (0,1) {$\bullet$};
\node at (0,2) {$\bullet$};
\node at (0,3) {$\bullet$};
\node at (3,0) {$\bullet$};
\node at (3,1) {$\bullet$};
\node at (3,2) {$\bullet$};
\node at (3.75,0) {$\bullet$};
\node at (6,0) {$\bullet$};
\node at (6,1) {$\bullet$};
\node at (6,2) {$\bullet$};
\node at (6.75,1) {$\bullet$};
\node at (9,0) {$\bullet$};
\node at (9,1) {$\bullet$};
\node at (9.75,0) {$\bullet$};
\node at (9.75,1) {$\bullet$};
\draw[-, thick] (0,0) to  (0,3);
\draw[-, thick] (3,0) to  (3,2);
\draw[-, thick] (6,0) to  (6,2);
\draw[-, thick] (6,2) to  (6.75,1);
\draw[-, thick] (9,0) to  (9,1);
\draw[-, thick] (9.75,0) to  (9.75,1);
\end{scope}

\begin{scope}[yshift=-13.75cm]
\node at (0,0) {$\bullet$};
\node at (0,1) {$\bullet$};
\node at (0,2) {$\bullet$};
\node at (0,3) {$\bullet$};
\node at (0,4) {$\bullet$};

\node at (3,0) {$\bullet$};
\node at (3,1) {$\bullet$};
\node at (3,2) {$\bullet$};
\node at (3,3) {$\bullet$};
\node at (3.75,0) {$\bullet$};

\node at (6,0) {$\bullet$};
\node at (6,1) {$\bullet$};
\node at (6,2) {$\bullet$};
\node at (6,3) {$\bullet$};
\node at (6.75,1) {$\bullet$};

\node at (9,0) {$\bullet$};
\node at (9,1) {$\bullet$};
\node at (9.75,0) {$\bullet$};
\node at (9.75,1) {$\bullet$};
\node at (9.35,2) {$\bullet$};

\node at (12,0) {$\bullet$};
\node at (12,1) {$\bullet$};
\node at (12,2) {$\bullet$};
\node at (13,0) {$\bullet$};
\node at (12.75,1) {$\bullet$};

\node at (15,0) {$\bullet$};
\node at (15,1) {$\bullet$};
\node at (15,2) {$\bullet$};
\node at (15,3) {$\bullet$};
\node at (15.75,2) {$\bullet$};

\node at (18,0) {$\bullet$};
\node at (18,1) {$\bullet$};
\node at (18,2) {$\bullet$};
\node at (18.75,0) {$\bullet$};
\node at (18.75,1) {$\bullet$};

\node at (21,0) {$\bullet$};
\node at (21,1) {$\bullet$};
\node at (21.75,0) {$\bullet$};
\node at (21.75,1) {$\bullet$};
\node at (22.5,0) {$\bullet$};

\draw[-, thick] (0,0) to  (0,4);
\draw[-, thick] (3,0) to  (3,3);
\draw[-, thick] (6,0) to  (6,3);
\draw[-, thick] (6,2) to  (6.75,1);
\draw[-, thick] (9,0) to  (9,1);
\draw[-, thick] (9.75,0) to  (9.75,1);
\draw[-, thick] (9.75,1) to  (9.35,2);
\draw[-, thick] (9,1) to  (9.35,2);
\draw[-, thick] (12,0) to  (12,2);
\draw[-, thick] (12,2) to  (12.75,1);
\draw[-, thick] (15,0) to  (15,3);
\draw[-, thick] (15,3) to  (15.75,2);
\draw[-, thick] (18,0) to (18,2);
\draw[-, thick] (18.75,0) to (18.75,1);
\draw[-, thick] (21,0) to (21,1);
\draw[-, thick] (21.75,0) to (21.75,1);

\end{scope}

\end{tikzpicture}
\caption{Blossoming forests on $1\leq n \leq 5$ vertices.}
\label{F:blossoming}
\end{figure}

\begin{Proposition}\label{P:step 3}
Let $\lambda$ be a partition with $|\lambda|=l > 1$ and let $\tau$ be a rooted forest on $m$ vertices.
\begin{enumerate}
\item 
If $\tau$ is a blossoming forest, then 
$
\langle s_{\lambda}\circ F_\tau, s_{(1^{ml})} \rangle = 
\begin{cases}
1 & \mt{ if $\lambda = (1^l)$};\\
0 & \mt{ otherwise.} 
\end{cases}
$
\item If $\tau$ is a dry forest, then $\langle s_{\lambda}\circ F_\tau, s_{(1^{ml})} \rangle =0$. 
\end{enumerate}
\end{Proposition}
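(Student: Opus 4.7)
The plan is to prove both parts by strong induction on the number of vertices $m$ of the forest $\tau$, using Lemma \ref{L:step 1}(3) as the principal dualizing tool and the multiplicativity of $F_\tau$ coming from Corollary \ref{C:character of a tree representation} to handle multi-component forests.

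For the base case $m = 1$, the forest $\tau$ is a single vertex (which is blossoming by Definition \ref{D:blossoming}) and $F_\tau = s_{(1)}$. Axioms P2 and P3 together give $s_\lambda \circ s_{(1)} = s_\lambda$, so $\langle s_\lambda \circ F_\tau, s_{(1^l)}\rangle = \langle s_\lambda, s_{(1^l)}\rangle$, which equals $1$ exactly when $\lambda = (1^l)$ and $0$ otherwise, matching part (1); the dry case is vacuous here.

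For the inductive step I split on the structure of $\tau$. In the \emph{connected} case, $\tau$ is a tree with root $r$; setting $\tau' = \tau \setminus \{r\}$ gives $F_\tau = s_{(1)} F_{\tau'}$, and Lemma \ref{L:step 1}(3) yields
\[
\langle s_\lambda \circ F_\tau, s_{(1^{ml})}\rangle = \langle s_{\lambda'} \circ F_{\tau'}, s_{(1^{(m-1)l})}\rangle,
\]
reducing to the smaller forest $\tau'$ with the conjugate partition $\lambda'$; I then apply the induction hypothesis to $\tau'$, using that the blossoming/dry status of $\tau$ is determined by $\tau'$ together with the multiplicity structure at the root (Definition \ref{D:blossoming}). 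In the \emph{disconnected} case, I write $\tau = \bigsqcup_j (\text{$i_j$ copies of } o_j)$ grouped by isomorphism type of component, so that Corollary \ref{C:character of a tree representation} and Remark \ref{R:more general} give $F_\tau = \prod_j (s_{(i_j)} \circ F_{o_j})$. Iterated application of (\ref{A:plethysm formula 2}) expands $s_\lambda \circ F_\tau$ into a sum $\sum_{(\mu^{(j)})} \gamma \prod_j (s_{\mu^{(j)}} \circ (s_{(i_j)} \circ F_{o_j}))$ with $\mu^{(j)} \vdash l$, and the identity
\[
\langle \prod_j f_j, s_{(1^N)}\rangle = \prod_j \langle f_j, s_{(1^{\deg f_j})}\rangle,
\]
which follows from $\Delta s_{(1^N)} = \sum_a s_{(1^a)} \otimes s_{(1^{N-a})}$, reduces the inner product of interest to a sum of products of factors $\langle (s_{\mu^{(j)}} \circ s_{(i_j)}) \circ F_{o_j}, s_{(1^{l \cdot i_j |o_j|})}\rangle$ by associativity of plethysm. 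Each such factor is controlled by the induction hypothesis on the strictly smaller tree $o_j$, or by Lemma \ref{L:step 2} when $o_j$ is an MTB.

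The principal obstacle I expect is reconciling the local combinatorial condition in Definition \ref{D:blossoming} with the global symmetric-function cancellation. Part (2) is the more tractable half: if some sub-component is dry, the induction forces one factor in the product to vanish, giving $0$. The delicate part is (1): after tracking $\lambda \mapsto \lambda'$ through the recursion and expanding the mixed plethysms $s_{\mu^{(j)}} \circ s_{(i_j)}$ into Schur bases via (\ref{A:complete}) and (\ref{A:elementary}), one must show that exactly one tuple $(\mu^{(j)})$ contributes, with value $+1$, and that this happens precisely when $\lambda = (1^l)$. This uniqueness-with-sign analysis, especially the alternation between rows and columns forced by the $\lambda \leftrightarrow \lambda'$ step of Lemma \ref{L:step 1}(3), is where the bulk of the combinatorial work sits; it is also the step where the definition of ``blossoming'' must be used in its full strength, not merely its consequence on each MTB in isolation.
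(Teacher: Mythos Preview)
Your outline follows essentially the same architecture as the paper's proof: strong induction on $m$, the tree case via Lemma~\ref{L:step 1}(3), and the forest case via the plethysm-of-products formula~(\ref{A:plethysm formula 2}) applied to the factorisation $F_\tau=\prod_j s_{(i_j)}\circ F_{o_j}$.  Two points of comparison are worth recording.

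First, the paper does not expand over all component types at once; it peels off a single factor, writing $F_\tau=g_1g_2$ with $g_1=s_{(i_1)}\circ F_{o_1}$, and shows that in the blossoming case the whole sum collapses to $\langle s_\lambda\circ g_2,\,s_{(1^{\bullet})}\rangle$ with the \emph{same} $\lambda$.  This one-factor-at-a-time reduction is what makes the induction close cleanly, whereas your all-at-once expansion produces a sum over tuples $(\mu^{(j)})$ with iterated Kronecker coefficients and leaves you with the ``uniqueness-with-sign'' problem you flag.

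Second, the step you defer as ``the bulk of the combinatorial work'' is exactly what the paper carries out explicitly, and not with (\ref{A:complete})--(\ref{A:elementary}) (which only treat $s_{(n)}$ and $s_{(1^n)}$, not general $s_\mu$).  Instead the paper expands $s_\mu\circ s_{(i_1)}$ through power sums and invokes the generalized Kostka numbers $K^{(\tilde\nu)}_{\tilde\mu,\,i_1\tilde\nu}$ introduced in Section~\ref{S:Preliminaries}: after applying the induction hypothesis to $o_1$ one is forced to $\tilde\mu=(1^s)$, the identity $K^{(\tilde\nu)}_{(1^s),\,i_1\tilde\nu}=1$ then forces $\mu=(s)$, and finally $\gamma^{\lambda}_{(s),\nu}=\delta_{\lambda,\nu}$ collapses the sum.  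Your proposal is on the right path, but this Kostka computation is the missing ingredient that converts the outline into a proof; without it the formulas you cite do not suffice to control $s_{\mu^{(j)}}\circ s_{(i_j)}$ for arbitrary $\mu^{(j)}$.
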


\begin{proof}

We prove both of our claims by induction on $m$. It is straightforward to verify them for $m=1$ and $2$,
so, we assume that our claim is true for all forests with $|\tau| < m$. 
Suppose $o_1,\dots, o_r$ are the oduns of the connected components of the forest $\tau$. Note that 
if $\tau$ is a rooted tree, then by part 3 of Lemma~\ref{L:step 1} our problem reduces to the forest case.
 
Let $g_1$ denote $s_{(i_1)}\circ F_{o_1}$ and let $g_2$ denote 
$\prod_{j=2}^r s_{(i_j)} \circ F_{o_j}$ so that $F_\tau = g_1g_2$. 
\begin{align*}
s_{\lambda}\circ F_\tau &= s_{\lambda}\circ (g_1 g_2) \\
&=  \sum_{\mu,\mu} \gamma_{\mu,\nu}^{\lambda} (s_\mu \circ (s_{(i_1)}\circ F_{o_1})) (s_\nu \circ g_2) \\
&=  \sum_{\mu,\nu} \gamma_{\mu,\nu}^{\lambda} 
\left( \sum_{\tilde{\mu}} \left(\sum_{\tilde{\nu}} z_{\tilde{\nu}}^{-1} \chi^{\mu}_{\tilde{\nu}} 
K_{\tilde{\mu},i_1\tilde{\nu}}^{(\tilde{\nu})}  \right) (s_{\tilde{\mu}} \circ F_{o_1}) \right) (s_{\nu} \circ g_2).
\end{align*}
Now we are ready to start the induction argument. 
On one hand, if $\tau$ is dry, at least one of its rooted subtrees is dry. 
Without loss of generality, let $o_1$ denote the dry one. 
Thus, $\langle s_{\tilde{\mu}} \circ F_{o_1}, s_{(1^{|\tilde{\mu}||o_1|})} \rangle=0$ for all partitions $\tilde{\mu}$, 
implying that $\langle s_{\lambda}\circ F_\tau, s_{(1^{ml})} \rangle = 0$. 
On the other hand, if $\tau$ is blossoming, all of its rooted subtrees are blossoming. Therefore, 
$$
\langle s_{\tilde{\mu}} \circ F_{o_1}, s_{(1^{|\tilde{\mu}||o_1|})} \rangle = 
\begin{cases}
1 & \mt{ if $\tilde{\mu}$ is of the form $(1^s)$, $s=|\tilde{\mu}|$};\\
0 & \mt{ otherwise.} 
\end{cases}
$$
In this case, that is when $\tilde{\mu}=(1^s)$, we calculate that $K_{\tilde{\mu},i_1\tilde{\nu}}^{(\tilde{\nu})} = 1$. 
(Follows from the explicit description of the generalized Kostka numbers as given in~\cite{Doran}).
Therefore, 
$$
\sum_{\tilde{\nu}} z_{\tilde{\nu}}^{-1} \chi^{\mu}_{\tilde{\nu}} K_{\tilde{\mu},i_1\tilde{\nu}}^{(\tilde{\nu})} 
=\sum_{\tilde{\nu}} z_{\tilde{\nu}}^{-1} \chi^{\mu}_{\tilde{\nu}} 
=
\begin{cases}
1 & \text{ if $\mu = (s)$ };\\
0 & \text{ otherwise.}
\end{cases}
$$
But it follows from $\mu = (s)$ that $\gamma_{\mu,\nu}^{\lambda}=\delta_{\lambda,\nu}$, the Kronecker delta function. 
Thus, $\langle \lambda s_{(1^m)}, s_{\lambda}\circ g_1g_2 \rangle$ reduces to $\langle \lambda s_{(1^m)}, s_{\lambda}\circ g_2 \rangle$,
which, by induction, is equal to 1 if $\tau$, hence $g_2$ is blossoming. 

\end{proof}

As an application of Proposition~\ref{P:step 3} we determine the multiplicity of the sign representation in $\mc{C}_{k,n}$. 
It boils down to the counting of blossoming trees.

\begin{Theorem}\label{T:sign}
In $\mc{C}_{n-1,n}$ the sign representation occurs exactly $2^{n-3}$ times,
and in $\mc{C}_n$ the sign representation occurs $2^{n-2}$ times. 
\end{Theorem}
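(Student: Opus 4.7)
First, I would reduce the theorem to a count of blossoming forests. Proposition~\ref{P:step 3}, which handles the cases $|\lambda|>1$, together with a direct argument for the case $|\lambda|=1$ (based on part~3 of Lemma~\ref{L:step 1} applied iteratively along the tree structure, combined with the standard multiplicativity of the sign-coefficient under products of Frobenius characteristics that follows from Frobenius reciprocity for Young subgroups), implies that the multiplicity of the sign representation $s_{(1^n)}$ in $o(\sigma)$ is $1$ if $\sigma$ is blossoming and $0$ if $\sigma$ is dry. Since $\mc{C}_{k,n}$ decomposes as a direct sum of oduns indexed by unlabeled rooted forests on $n$ vertices with exactly $n-k$ components, the multiplicity of $s_{(1^n)}$ in $\mc{C}_{n-1,n}$ (the single-component case) equals the number of blossoming rooted trees on $n$ vertices, and its multiplicity in $\mc{C}_n$ equals the total number of blossoming rooted forests on $n$ vertices.

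Let $t_n$ and $f_n$ denote these two counts. By the very definition of blossoming for forests, adjoining a fresh root to a blossoming forest on $n$ vertices yields a blossoming tree on $n+1$ vertices, and this is a bijection; hence $t_{n+1}=f_n$. It therefore suffices to prove $f_n=2^{n-2}$ for all $n\geq 2$. I would proceed by induction on $n$: the base cases $n=2,3$ are verified directly against Figure~\ref{F:blossoming}, and the inductive step reduces to the recurrence $f_n=2f_{n-1}$ for $n\geq 3$.

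To derive this recurrence, set $F(x)=\sum_{n\geq 1}f_nx^n$. A blossoming rooted tree on $n\geq 1$ vertices is a root together with an unordered multiset of blossoming child subtrees, subject to the constraint that no two chain children share the same odd length. Enumerating such multisets by separating odd-length chain children, even-length chain children, and non-chain blossoming children gives the self-referential functional equation
\[
1+F(x)=\prod_{\ell\geq 1,\ \ell\text{ odd}}(1+x^\ell)\,\prod_{\ell\geq 2,\ \ell\text{ even}}(1-x^\ell)^{-1}\,\prod_{k\geq 4}(1-x^k)^{-(f_{k-1}-1)},
\]
where the last product accounts for non-chain blossoming subtrees on $k$ vertices (of which there are $f_{k-1}-1=t_k-1$). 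Under the inductive hypothesis $f_j=2^{j-2}$ for $2\leq j<n$, only finitely many factors contribute modulo $x^{n+1}$, and matching the coefficient of $x^n$ on both sides yields $f_n=2f_{n-1}=2^{n-2}$; equivalently, the closed form $F(x)=x(1-x)/(1-2x)$ is the unique solution of the equation.

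The main obstacle is precisely this coefficient-by-coefficient verification of a self-referential product: the telescoping that produces exactly $2f_{n-1}$ is not immediately transparent from the product form, even though under the inductive hypothesis it reduces to a finite calculation. A cleaner bijective alternative would be an explicit encoding of blossoming forests on $n$ vertices by binary strings of length $n-2$ (or equivalently by compositions of $n-1$), but the blossoming constraint, which forbids repeated odd-length MTBs at a vertex while allowing repeated even-length ones, does not suggest such a canonical encoding to me, so I would fall back on the generating-function argument outlined above.
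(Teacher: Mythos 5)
Your reduction of Theorem~\ref{T:sign} to counting blossoming forests is correct and is the same reduction the paper makes (Corollary~\ref{C:character of n-1} together with Proposition~\ref{P:step 3}), and your Euler-product functional equation for $1+F(x)$ is the correct generating function for blossoming forests. The gap is exactly the step you flag as ``the main obstacle'': you never verify that the coefficient of $x^n$ in that product is $2^{n-2}$, and no such verification exists, because the identity is false. Writing $C_k$ for the $k$-chain and expanding your product through degree $7$ (the relevant factors are $(1+x)(1+x^3)(1+x^5)(1+x^7)$, then $(1-x^2)^{-1}(1-x^4)^{-1}(1-x^6)^{-1}$, and, using $t_4=2$, $t_5=4$, $t_6=8$, $t_7=16$, the factor $(1-x^4)^{-1}(1-x^5)^{-3}(1-x^6)^{-7}(1-x^7)^{-15}$) yields
\[
1+x+x^2+2x^3+4x^4+8x^5+16x^6+34x^7+\cdots,
\]
while $(1-x-x^2)/(1-2x)$ has coefficient $32$ at $x^7$. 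Indeed there are $34$ blossoming forests on $7$ vertices; the two not accounted for by the closed form are $\{C_4,C_3\}$ and $\{\tau,C_3\}$, where $\tau$ is the unique non-chain blossoming tree on $4$ vertices. Both are blossoming (each has only one odd-length MTB attached to the virtual root), and both have Frobenius characteristic $p_1^4\cdot p_1^3=p_1^7$, the regular representation of $S_7$, which contains the sign representation exactly once. So your own (correct) functional equation refutes the count you set out to prove.

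The paper's proof of Proposition~\ref{P:blossoming} proceeds differently, via the recursion $b_{n+1}=a_n$ and $a_n=b_{n-1}+d_{n-1}$, where $a_n$ and $b_n$ count blossoming forests without and with an isolated vertex. That recursion has the matching defect: its two constructions (grow the isolated vertex of a $b_{n-1}$-forest into a $2$-chain, or hang all components of a $d_{n-1}$-forest under a new root) produce only single trees and multi-component forests containing a $C_2$ component, so they miss every multi-component blossoming forest that has no isolated vertex and no $2$-chain component. The smallest such forests are the two listed above, on $7$ vertices, which is why everything agrees with $2^{n-2}$ for $n\le 6$ (as the decomposition tables confirm) and first diverges at $n=7$. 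In short, your approach is sound up to the final coefficient identity; that identity fails; and the failure is not an artifact of your method but an actual counterexample to Proposition~\ref{P:blossoming} (at $n=7$) and hence to the two assertions of Theorem~\ref{T:sign} (at $n=7$ and $n=8$, respectively).
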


By Corollary~\ref{C:character of n-1}, it suffices to prove that $\langle \chi^{n}, \chi^{(1^n)} \rangle  = 2^{n-2}$. 
We cast our problem in symmetric function language. 
Let $T_{n}$ denote the Frobenius characteristic $\mt{ch}(\chi^{n})$,
and let $T_{n-1,n}$ denote $\mt{ch}(\chi^{n-1,n})$. We already know that $T_{n-1,n} = s_{(1)} T_{n}$, 
hence that $\langle T_{n-1,n}, s_{(1^n)} \rangle  = \langle T_{n-1}, s_{(1^{n-1})} \rangle$. 
Therefore, by Proposition~\ref{P:step 3}, it suffices to find the number of 
blossoming trees on $n$ vertices.

\begin{Proposition}\label{P:blossoming}
The number of blossoming forests on $n$ vertices is $2^{n-2}$. 
\end{Proposition}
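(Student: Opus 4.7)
The plan is to prove by induction on $n$ that the number $f_n$ of blossoming rooted forests on $n$ vertices equals $2^{n-2}$ for all $n \geq 2$, by establishing the recurrence $f_n = 2 f_{n-1}$ for $n \geq 3$. The base case $f_2 = 1$ will follow immediately: the only other forest on two vertices (a pair of isolated vertices) produces two identical MTBs of odd length $1$ upon adjoining a new root, violating the blossoming condition.

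For the inductive step, I combine two structural ingredients. The first is the bijection $f_n = t_{n+1}$ between blossoming forests on $n$ vertices and blossoming rooted trees on $n+1$ vertices obtained by adjoining a common new root, where $t_n$ counts blossoming trees on $n$ vertices; the blossoming condition at the new root captures precisely the forest's blossoming condition, so in particular $t_n = f_{n-1}$. The second is a decomposition of a blossoming tree $T$ on $n+1$ vertices according to the number of children of its root $r$: if $r$ has exactly one child, the subtree below $r$ is an arbitrary blossoming tree on $n$ vertices (the blossoming constraint at $r$ is vacuous when only one MTB emanates from it), giving $t_n = f_{n-1}$ configurations; and if $r$ has two or more children, the subtrees at the children form a blossoming forest with at least two components on $n$ vertices, giving $f_n^{(\geq 2)}$ further configurations. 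Combining these yields $f_n = t_{n+1} = f_{n-1} + f_n^{(\geq 2)}$, so the recurrence $f_n = 2 f_{n-1}$ reduces to the identity $f_n^{(\geq 2)} = f_{n-1}$ for every $n \geq 3$.

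The core of the proof is then to establish this identity bijectively. I will construct a map $\phi$ from blossoming forests with at least two components on $n$ vertices to blossoming forests on $n-1$ vertices as follows: given such a forest $F$, if $F$ contains an isolated vertex (necessarily unique, since two would again produce two odd length-$1$ MTBs at the adjoined root) remove that vertex; otherwise, remove a canonically chosen leaf of the smallest component of $F$, using a fixed tie-breaking rule when several components tie in size. The inverse map adds a vertex back either as a new isolated component (when the target forest has no isolated vertex) or by extending the unique isolated vertex of the target forest into a chain of length two (when it does).

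The principal obstacle is twofold: making the tie-breaking rule precise for all configurations, and verifying that $\phi$ respects blossoming in both directions. Since the blossoming condition is local and multiple trees may share the same even initial chain length without violating blossoming, the verification reduces to a case analysis ensuring that neither removal nor insertion creates two identical odd-length MTBs at any vertex. These details are somewhat involved but introduce no essentially new ideas beyond those developed in the preceding sections.
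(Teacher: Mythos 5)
Your strategy is at bottom the same as the paper's. The paper splits blossoming forests into those with an isolated vertex ($b_n$ of them) and those without ($a_n$), proves $b_{n+1}=a_n$ and $a_n=b_{n-1}+d_{n-1}$, and these two relations already combine to give exactly your recurrence $d_n=2d_{n-1}$; you have repackaged the second relation as a direct bijection $f_n^{(\geq 2)}=f_{n-1}$ and dispensed with the generating functions. Your reduction of the Proposition to that identity is legitimate (it is an equivalence, since the one-component blossoming forests are counted by $t_n=f_{n-1}$), and your root-adjoining bijection, the one-child/many-children split, and the base case are all fine. The problem is the bijection $\phi$ itself.

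The two maps you describe are not mutually inverse, and neither is injective/surjective where it needs to be. The image of your proposed inverse consists only of forests that either contain an isolated vertex or contain a $2$-chain component (the one grown from the isolated vertex); but a blossoming forest with at least two components need contain neither. Take $F=\{3\text{-chain},\,4\text{-chain}\}$ on $n=7$ vertices: it is blossoming (the two maximal terminal branches at the adjoined root have lengths $3$ and $4$, so they are not two identical odd ones), has two components, no isolated vertex, and no $2$-chain. Your forward map deletes a leaf of the smallest component and sends $F$ to $\{2\text{-chain},4\text{-chain}\}$, which is also the image of $\{\bullet,\,2\text{-chain},\,4\text{-chain}\}$, so $\phi$ is not injective; and applying your inverse to $\{2\text{-chain},4\text{-chain}\}$ (which has no isolated vertex) returns $\{\bullet,2\text{-chain},4\text{-chain}\}\neq F$. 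No tie-breaking rule can repair this, because the identity $f_n^{(\geq2)}=f_{n-1}$ fails at $n=7$ under Definition~\ref{D:blossoming} as written: enumerating blossoming forests as multisets of blossoming trees with no odd-length chain repeated gives $34$ forests on $7$ vertices rather than $32$, the two extra ones being $\{3\text{-chain},4\text{-chain}\}$ and the union of a $3$-chain with the $4$-vertex tree whose root has a leaf child and a $2$-chain child. I should add that the paper's own proof has precisely the same gap: the relation $a_n=b_{n-1}+d_{n-1}$ asserts that every multi-component blossoming forest without an isolated vertex arises by growing an isolated vertex into a $2$-chain, and the same example refutes it. The discrepancy first appears at $n=7$ (for $n\leq 6$ every multi-component blossoming forest without an isolated vertex does contain a $2$-chain), which is why the paper's figure and tables, which stop at $n=6$, do not detect it; the fix must come from amending the definition of blossoming rather than from sharpening your case analysis.
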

\begin{proof}
Let $a_n$ denote the number of blossoming forests on $n$ vertices without any isolated vertices, 
and let $b_n$ denote the number of blossoming forests on $n$ vertices with an isolated vertex. 
Set $d_n= a_n + b_n$. Clearly, $d_n$ is the total number of blossoming forests on $n$ vertices. 
Few values of $a_n$ and $b_n$'s are $a_1=a_2=a_3=1, a_4= 3,a_5=5$, and $b_1=b_2=0,b_3=b_4=1,b_5=3$. 
See Figure~\ref{F:blossoming}.

There are obvious relations among $a_n$'s, $b_n$'s, and $d_n$'s. 
For example, adding an isolated vertex to a blossoming forest without an isolated vertex gives 
\begin{align}\label{A:ab 1}
b_{n+1} = a_n \ \mt{ for all $n\geq 1$.}
\end{align}
Similarly, we obtain all blossoming forests on $n$ vertices with no isolated vertex by attaching a new 
single vertex to the isolated vertex of a forest, or by attaching a new root to all of the connected components. 
We depict this in Figure~\ref{F:add a new vertex}. 
The relation we obtain here is 
\begin{align}\label{A:ab 2}
a_{n} = b_{n-1} + d_{n-1}= 2b_{n-1} + a_{n-1} \ \mt{ for $n\geq 3$.}
\end{align}
By combining (\ref{A:ab 1}) and (\ref{A:ab 2}) we arrive at a single recurrence,
\begin{align}\label{A:ab 3}
a_{n} = 2a_{n-2} + a_{n-1}
\end{align}
with initial conditions $a_0=a_1=0$, $a_2=1$. 
\begin{figure}[htp]
\centering
\begin{tikzpicture}[scale=.5]

\begin{scope}
\node at (0,0) {$\bullet$};
\node at (0,1) {$\bullet$};
\node at (0,2) {$\bullet$};
\node at (1,0) {$\bullet$};
\node at (0.75,1) {$\bullet$};
\draw[-, thick] (0,0) to  (0,2);
\draw[-, thick] (0,2) to  (0.75,1);
\end{scope}

\begin{scope}[xshift=5.5cm,yshift=4.5cm]
\node at (0,0) {$\bullet$};
\node at (0,1) {$\bullet$};
\node at (0,2) {$\bullet$};
\node at (2,1) {$\bullet$};
\node at (2,0) {$\bullet$};
\node at (0.75,1) {$\bullet$};
\draw[-, thick] (0,0) to  (0,2);
\draw[-, thick] (2,0) to  (2,1);
\draw[-, thick] (0,2) to  (0.75,1);
\end{scope}

\begin{scope}[xshift=5.5cm,yshift=-4.5cm]
\node at (0,0) {$\bullet$};
\node at (0,1) {$\bullet$};
\node at (0,2) {$\bullet$};
\node at (2,2) {$\bullet$};
\node at (1,3) {$\bullet$};
\node at (0.75,1) {$\bullet$};
\draw[-, thick] (0,0) to  (0,2);
\draw[-, thick] (0,2) to  (1,3);
\draw[-, thick] (2,2) to  (1,3);
\draw[-, thick] (0,2) to  (0.75,1);
\end{scope}

 \draw [->, thick,dashed] (2,2) -- (4,4);
  \draw [->, thick,dashed] (2,0) -- (4,-2);

\end{tikzpicture}
\caption{Adding a new root to blossoming forests.}
\label{F:add a new vertex}
\end{figure}
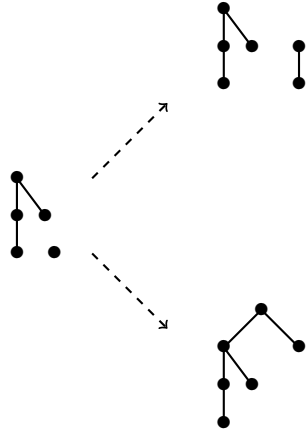
Let $f(x)$ denote the generating series $f(x) = \sum_{n\geq 0 } a_n x^n$.
A straightforward generating function computation gives
\begin{align*}
f(x)  = \frac{x^2}{1-x-x^2}.
\end{align*}
Denoting the generating series of $d_n$ by $G(x)$, the relationship $d_n = a_n + b_n = a_n + a_{n-1}$ 
tells us
\begin{align}
G(x) = f(x) + x f(x) = \frac{x^2+ x^3}{1-x-x^2} = \frac{x^2}{1-2x}
\end{align}
whose power series expansion is $G(x)= \sum_{n\geq 2} 2^{n-2} x^n$.

\end{proof}

\begin{proof}[Proof of Theorem~\ref{T:sign}]
It is immediate from Proposition~\ref{P:blossoming}.
\end{proof}

\begin{Remark}

Let $T_{n}$ denote the Frobenius characteristic $\mt{ch}(\chi^{n-1,n})$, and let $Y=Y(x)$ defined by 
$$
Y= \sum_{n=0}^\infty T_n \frac{x^n}{n!} \qquad (T_0=1)
$$
denote its generating function. We think of generating series $H = \sum_{n= 0}^\infty s_{(n)}$ 
as an operator acting (on the left) on series of symmetric functions by plethysm.
By~(\ref{A:Fo}) we see that the Frobenius characteristic series of $\chi^{k,n}$ is $\frac{1}{k!}s_{(1)}(H\circ Y)^k$. 
Therefore, a l\'a Polya, the functional equation that is satisfied by $Y$ is found to be 
$$
Y = s_{(1)}x + s_{(1)}xH\circ Y + \frac{s_{(1)}x (H\circ Y)^2}{2!} +  \frac{s_{(1)}x (H\circ Y)^3}{3!} + \cdots   = xs_{(1)} e^{H\circ Y}.
$$
Equivalently, we have 
\begin{align}\label{A:Lagrange}
\frac{Y(x)}{\mc{H}(Y(x))}= x,
\end{align}
where $\mc{H}$ is the operation of applying the operator $H$ (plethysitically) on the left, exponentiating the result,  
and then multiplying it by $s_{(1)}$. 
Thus, the solution to the Lagrange inversion problem~(\ref{A:Lagrange}) gives us the Frobenius characteristic of $\chi^{n-1,n}$.
\end{Remark}

\section{Non-attacking nilpotent rooks}\label{S:Rooks}

Non-attacking  rook placements are special file placements, hence they correspond to special labeled rooted forests. 
Indeed, the odun of a non-attacking nilpotent $k$-rook placement has exactly $n-k$ connected components, each of which 
is a rooted tree whose vertices have at most one sibling. In other words, each connected component is a chain. 
Since the ordering of these components does not play a role as far as the underlying labeled structure is concerned, 
the odun of the forest is completely determined by the sizes of the corresponding chains. 
Therefore, we have a one-to-one correspondence between oduns of nilpotent non-attacking $k$-rook placements on an $n\times n$ board
and the partitions of $n$ with exactly $n-k$ parts. 

Let us denote the Frobenius characteristic of the nilpotent non-attacking $n-k$-rook placements by $Z_{k,n}$. 
By the discussion above and by (\ref{A:Fo}) we re-express $Z_{k,n}$ as in 
\begin{align}
Z_{k,n} = \sum_{(\lambda_1,\dots, \lambda_k)=(1^{m_1} 2^{m_2} \dots) \vdash n} s_{(m_i)} \circ p_1^{i},
\end{align}
where $m_i$ is the number of times the part $i$ occurs in $\lambda$. 
Let us denote by $\mc{R}_{k,n}$ the set of all non-attacking $k$-rook placements on $[n]\times [n]$. 
Since $\mc{R}_n = \bigcup_{k=1}^n \mc{R}_{k,n}$, the corresponding Frobenius characteristic 
of nilpotent rook placements is $Z_n:=\sum_{k=1} Z_{k,n}$.

\begin{Theorem}
The number of nilpotent non-attacking $n-k$-rook placements is given by 
$$
\langle Z_{k,n}, s_{(1)}^n \rangle = \langle s_{(1)}^n, \sum_{(\lambda_1,\dots, \lambda_k)=(1^{m_1} 2^{m_2} \dots) \vdash n} s_{(m_i)} \circ p_1^{i}
\rangle.
$$
\end{Theorem}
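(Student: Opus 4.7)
The plan is to read the claimed inner products as a dimension count. The key observation is that $s_{(1)}^n = p_1^n$ is the Frobenius characteristic of the regular representation of $S_n$, so for any $S_n$-character $\chi$ the pairing $\langle \mathrm{ch}(\chi), s_{(1)}^n\rangle$ equals $\chi(e)$, i.e., the dimension of the underlying representation. Applying this to the permutation character of $S_n$ on $\mathrm{Nil}(\mc{R}_n) \cap \mc{R}_{n-k,n}$ immediately yields the first equality; the second is then merely the substitution of the closed form for $Z_{k,n}$ displayed immediately above the theorem. So the real content is to justify that closed form.

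To that end I would first decompose the action of $S_n$ on $\mathrm{Nil}(\mc{R}_n)\cap \mc{R}_{n-k,n}$ into orbits. By Lemma~\ref{L:odun}, the orbits are indexed by oduns (unlabeled rooted forests). For a non-attacking nilpotent placement, the associated digraph has in-degree and out-degree at most one at every vertex and no cycles, so every connected component is forced to be a directed chain. Since components of equal length are interchangeable without altering the odun, the data of an odun is exactly the multiset of chain lengths, i.e., a partition $\lambda \vdash n$ with $\ell(\lambda) = k$.

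Next I would compute the Frobenius characteristic of a single orbit. For a chain on $i$ vertices, a labeling is recorded by an ordered $i$-tuple read from root to leaf, so $S_i$ acts freely and transitively on the $i!$ labelings; the odun is the regular representation, whose Frobenius characteristic is $p_1^i$. Writing $\lambda = (1^{m_1} 2^{m_2} \cdots)$, the corresponding orbit consists of $m_i$ copies of a chain of length $i$ for each $i$. Corollary~\ref{C:character of a tree representation} (with Remark~\ref{R:more general} used to combine chains of different lengths) then identifies the orbit's Frobenius characteristic with $\prod_i s_{(m_i)} \circ p_1^i$. Summing over all $\lambda \vdash n$ with $\ell(\lambda) = k$ produces the displayed expression for $Z_{k,n}$, and pairing with $s_{(1)}^n$ closes the argument.

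The one point that requires care is the identification of the odun of a chain of length $i$ with the regular representation of $S_i$; this reduces to the easy observation that the stabilizer of a labeled rooted chain $(r, c_1, \ldots, c_{i-1})$ in $S_i$ must fix $r$, then $c_1$, and so on, hence is trivial. With that in hand, everything else is a direct application of the plethysm machinery already developed in Section~\ref{S:Plethysm}.
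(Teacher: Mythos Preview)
Your proposal is correct and mirrors the paper's approach exactly: the paper's proof is the single line ``If $\mc{F}_V$ is the Frobenius characteristic of an $S_n$-module $V$, then $\dim V = \langle \mc{F}_V, p_1^n\rangle$,'' which is precisely your key observation that pairing with $s_{(1)}^n = p_1^n$ recovers the dimension of a permutation module. The additional material you supply---the orbit decomposition into partitions with $k$ parts, the identification of a chain's odun with the regular representation, and the assembly via Corollary~\ref{C:character of a tree representation}---is not part of the paper's proof of this theorem at all but rather a (correct) re-derivation of the formula for $Z_{k,n}$ that the paper establishes in the discussion immediately preceding the theorem statement.
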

\begin{proof}
If $\mc{F}_V \in \Lambda^{n}$ is the Frobenius characteristic of an $S_n$-module $V$, then 
$\dim V  = \langle \mc{F}_V, p_1^n \rangle$.
\end{proof}

\begin{Theorem}
For any $n\geq 2$ and $1\leq k \leq n-1$, the total number of irreducible representations in $\mt{Nil}(\mc{R}_{n-k,n})$ is equal to
$$
p(k,n)=\# \mt{number of partitions of $n$ with $k$ non-zero parts}.
$$
Similarly, the occurrence of the sign representation in $\mt{Nil}(\mc{R}_{n-k,n})$ is equal to the number of partitions of $n$ 
with $k$ parts $(\lambda_1,\dots,\lambda_k)$ such that for $i=1,\dots, k$, $\lambda_i$ is even, or $\lambda_j \neq \lambda_i$
for all $j\in [n]-\{i\}$.
\end{Theorem}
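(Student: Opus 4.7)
The plan is to exploit the one-to-one correspondence between oduns of nilpotent non-attacking $(n-k)$-rook placements and partitions $\lambda=(\lambda_1,\dots,\lambda_k)\vdash n$ with exactly $k$ parts (recorded in the paragraph preceding the theorem), and then to combine this with the blossoming machinery developed in Section~\ref{S:Sign}. Throughout, $\sigma_\lambda$ denotes a chain-forest whose connected components are chains of lengths $\lambda_1,\dots,\lambda_k$.

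First I would dispose of the counting claim. By Lemma~\ref{L:odun}, each $S_n$-orbit in $\mt{Nil}(\mc{R}_{n-k,n})$ is completely determined by its underlying unlabeled rooted forest, and distinct underlying forests give distinct orbits. Since the underlying forests are in bijection with partitions of $n$ into $k$ parts, the number of orbit representations (oduns) appearing in $\mt{Nil}(\mc{R}_{n-k,n})$ equals $p(k,n)$, proving the first sentence.

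For the sign representation count, I would combine Theorem~\ref{T:sign} with Proposition~\ref{P:step 3} (applied at $\lambda=(1)$, which is the content behind the matching of the two $2^{n-2}$ counts in Theorem~\ref{T:sign} and Proposition~\ref{P:blossoming}) to conclude that the multiplicity of $V_{(1^n)}$ in the odun $o(\sigma)$ is $1$ when $\sigma$ is blossoming and $0$ when $\sigma$ is dry. Hence
\[
\langle \chi^{\mt{Nil}(\mc{R}_{n-k,n})}, \chi^{(1^n)}\rangle \;=\; \#\bigl\{\lambda\vdash n,\ \ell(\lambda)=k : \sigma_\lambda\ \text{is blossoming}\bigr\}.
\]
It then remains to unpack the blossoming condition on $\sigma_\lambda$. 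In a chain every internal vertex has exactly one child, so each chain is a terminal branch, and no proper sub-chain can be maximal since it can always be extended upward along the same chain. Therefore the maximal terminal branches of $\sigma_\lambda$ (viewed as a forest via the phantom root in Definition~\ref{D:blossoming}) are precisely the $k$ chains themselves, of lengths $\lambda_1,\dots,\lambda_k$, all emanating from that phantom root. The blossoming condition -- no two identical MTBs of odd length share a parent -- thus becomes: whenever $\lambda_i$ is odd, it appears only once among $\lambda_1,\dots,\lambda_k$. Equivalently, for every $i$ either $\lambda_i$ is even or $\lambda_j\neq \lambda_i$ for all $j\neq i$, matching the statement exactly.

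The main obstacle, really just bookkeeping, is verifying the identification of the MTBs of the chain-forest with the individual chains and confirming that "connected to the same parent" in Definition~\ref{D:blossoming} refers to the adjoined phantom root in the forest formulation. Once this is in place, both assertions of the theorem follow immediately from the results of Sections~\ref{S:induced representation} and~\ref{S:Sign}.
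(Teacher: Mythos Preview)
Your proposal is correct and follows the same route as the paper: the paper's own proof is a one-liner citing ``the discussion above and Proposition~\ref{P:step 3},'' and your argument is precisely an unpacking of that citation---you use the bijection between oduns and partitions with $k$ parts for the first claim, and then translate the blossoming criterion of Proposition~\ref{P:step 3} into the stated condition on the parts $\lambda_i$ for the second. Your identification of the MTBs of a chain-forest with the individual chains (all attached to the phantom root) is exactly the content needed, and the only cosmetic point is that you could invoke Lemma~\ref{L:step 2} directly for the chain case rather than routing through Theorem~\ref{T:sign}.
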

\begin{proof}
The proof follows from the discussion above and Proposition~\ref{P:step 3}.
\end{proof}

\section{Final remarks}\label{S:Final}

Suppose $\lambda = (\lambda_1,\dots, \lambda_k)=(1^{m_1} 2^{m_2} \dots) \vdash n$ is the partition type of 
the odun $o(\tau)$ of a nilpotent $n-k$ non-attacking rook placement $\tau$. The scalar product 
$\langle F_{o(\tau)}, s_{(1)}^n \rangle$ gives the dimension of the corresponding $S_n$-module. 
Since we are working with permutation representations, the cardinality of an $S_n$-set gives the dimension of 
the corresponding representation, therefore, the number of labelings of the odun $o(\tau)$ is equal to the dimension 
of the corresponding forest representation. Now, the formula 
\begin{align}\label{A:dimension}
\dim \mc{O}_{o(\tau)} = \langle F_{o(\tau)}, s_{(1)}^n \rangle = \frac{n !} {m_1 ! m_2 ! \cdots }
\end{align}
is easily verified.


A rooted tree $o$ is a poset with unique maximal element and its Hasse diagram contains no cycles. 
If $a \in o$ is a vertex, then its {\em hook} is defined to be $H_a:=\{b \in o:\ b \leq a \}$.
The corresponding {\em hook-length} is $h(a):=|H_a|$. 
A {\em natural labeling} on $o$ is a bijection $g:o \rightarrow [n]$ such that $a < b$ implies $g(a) > g(b)$. 
The famous `hook-length formula' of Knuth~\cite{Knuth3} which 
is proven by Sagan in his thesis~\cite{SaganThesis} asserts that the number of natural labelings of $o$ is equal to 
\begin{align}\label{A:KnuthSagan}
f^\sigma = \frac{n!}{\prod_{a\in o} h(a)}.
\end{align}
Let $\tau$ be as in the previous paragraph so that its odun $o(\tau)$ consists of $m_1$ chains of length 1, 
$m_2$ chains of length 2, and so on.
We add a new root to $\tau$ to obtain a rooted tree $\sigma= \sigma_\tau$. 
Since $\langle F_o , p_1^{n+1} \rangle = \langle F_\tau, p_1^n \rangle$, by~(\ref{A:dimension})
$\dim F_o =  \frac{n !} {m_1 ! m_2 ! \cdots }$. 
On the other hand, by~(\ref{A:KnuthSagan}), we see that $f^\sigma = \frac{(n+1)!}{(2!)^{m_1} (3!)^{m_2}\cdots}$, 
which is different than $\dim F_o$. In the next subsection we explain a more general dimension formula for the dimension 
of a rooted forest representation.

\subsection{Dimension of a forest representation}

In the remainder of this subsection $\sigma$ denotes an unlabeled rooted forest.
If $a$ is a vertex of $\sigma$, then we denote by $\sigma_a^0$ the rooted subforest 
$\{ b \in \sigma :\ b < a \}$, and denote by $\sigma_a$ the rooted subtree $\{ b \in \sigma :\ b \leq a \}$.
Finally, let $\gamma_a$ denote the set $\gamma_a=\{a_1,\dots, a_r\}$, the complete list of children of $a$ whose corresponding 
subtrees $\sigma_{a_i}$ are distinct, that is to say $\sigma_{a_i} \neq \sigma_{a_j}$ if $1\leq i \neq j \leq r$. 
In this case, we denote by $m(a;a_i)$ ($i=1,\dots, r$) the multiplicity of $\sigma_{a_i}$ in $\sigma_a^0$. 

\begin{Theorem}

If $\sigma$ is either a rooted tree on $n+1$ vertices, or a rooted forest on $n$ vertices,
then the dimension of the corresponding representation is 
\begin{align}\label{A:ms}
\frac{ n! } { \prod_{a\in \sigma} \prod_{b \in \gamma_a} m(a;b)!}.
\end{align}

\end{Theorem}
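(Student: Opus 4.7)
The plan is to derive the formula by the orbit-stabilizer theorem. I interpret (\ref{A:ms}) as $\dim o(\sigma) = n!/|\mathrm{Aut}(\sigma^{\ast})|$, where $\sigma^{\ast}$ denotes $\sigma$ viewed as a rooted tree on $n+1$ vertices: when $\sigma$ is already a tree, $\sigma^{\ast} = \sigma$; when $\sigma$ is a forest on $n$ vertices, $\sigma^{\ast}$ is obtained by attaching a virtual root to every connected component of $\sigma$, so that the product $\prod_{a \in \sigma}$ in the denominator uniformly ranges over the vertices of $\sigma^{\ast}$.

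First I would identify the dimension with an orbit size. By construction, $o(\sigma)$ is the $S_n$-orbit of any labeled representative of $\sigma$ under the conjugation action on $\mc{P}_n$, so $\dim o(\sigma) = n!/|\mathrm{Stab}_{S_n}(\sigma)|$. Since the conjugation action is precisely the relabeling action on labeled rooted forests, a permutation $\pi \in S_n$ stabilizes $\sigma$ iff the relabeling it induces is a graph automorphism of the underlying unlabeled rooted forest, equivalently (with the virtual root fixed pointwise) a root-preserving automorphism of $\sigma^{\ast}$. Therefore $|\mathrm{Stab}_{S_n}(\sigma)| = |\mathrm{Aut}(\sigma^{\ast})|$.

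Second, by induction on $|\sigma^{\ast}|$ I would show that $|\mathrm{Aut}(\sigma^{\ast})| = \prod_{a \in \sigma^{\ast}}\prod_{b \in \gamma_a} m(a;b)!$. For the inductive step let $r$ be the root of $\sigma^{\ast}$ and let $\sigma_1, \ldots, \sigma_l$ be the distinct isomorphism types of subtrees hanging from $r$, with multiplicities $k_i = m(r;b_i)$. A root-fixing automorphism permutes the $k_i$ identical copies of each $\sigma_i$ ($k_i!$ choices) and acts independently on each copy by an automorphism of $\sigma_i$, giving
\begin{equation*}
|\mathrm{Aut}(\sigma^{\ast})| = \prod_{i=1}^l k_i! \cdot |\mathrm{Aut}(\sigma_i)|^{k_i}.
\end{equation*}
The leading factor $\prod_i k_i! = \prod_{b \in \gamma_r} m(r;b)!$ is precisely the $a = r$ contribution to the claimed product, while the inductive hypothesis applied to each $\sigma_i$ (which has strictly fewer vertices) expands the remaining factors into $\prod_{a \neq r}\prod_b m(a;b)!$. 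Combining with the first step yields (\ref{A:ms}).

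The main obstacle is the uniform treatment of the tree and forest cases. If one works with a forest $\sigma$ directly, the literal product over forest vertices omits the symmetry factor $\prod_i k_i!$ accounting for permutations of identical connected components, and the formula would be off by this factor. Adopting the virtual-root viewpoint, which is natural since a rooted tree on $n+1$ vertices carries the same combinatorial data as a rooted forest on $n$ vertices together with a distinguished top node, removes this ambiguity and lets the orbit-stabilizer reduction combine cleanly with the recursive count of automorphisms. A secondary technical point, which is routine but worth verifying explicitly, is the identification of the $S_n$-stabilizer of $\sigma$ under conjugation on $\mc{P}_n$ with $\mathrm{Aut}(\sigma^{\ast})$; this is essentially the statement that conjugation on partial transformations is the relabeling action on the associated directed graphs.
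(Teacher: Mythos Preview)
Your approach is essentially the paper's: both identify the dimension with the orbit size $n!/|\mathrm{Stab}_{S_n}(\sigma)|$ and recognize the stabilizer as the rooted-tree automorphism group, though you carry out the recursive computation of $|\mathrm{Aut}(\sigma^{\ast})|$ explicitly by induction while the paper merely sketches it (observing that permuting the $m(a;b)$ identical sibling subtrees fixes the labeling, and declaring the formula follows). Your concern about the forest case is legitimate and matches the paper's implicit convention: the product $\prod_{a\in\sigma}$ must indeed be read over the vertices of the associated tree $\sigma^{\ast}$ (including the virtual root) for the formula to be correct, since the paper's own reduction step equates the tree and forest dimensions.
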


\begin{proof}
We start with the assumption that $\sigma$ is a rooted tree on $n+1$ vertices. Let $\sigma^0$ denote the forest obtained 
from $\sigma$ by removing the root. Since the dimension of the representation is equal to $\langle F_\sigma , s_{(1)}^{n+1} \rangle
=\langle F_{\sigma^0} , s_{(1)}^{n} \rangle$, it suffices to prove our claim for rooted forests on $n$ vertices. 

Towards this end we choose arbitrarily a labeled rooted forest $\sigma$. 
For each vertex $a$ of $\sigma$ and a subtree $\sigma_{b}$, where $b\in \gamma_a$, the permutation of 
$m(a;b)$ copies of $\sigma_b$ does not change the labeled forest $\sigma$. Our claim follows from this observation.
(See Figure~\ref{F:different but the same} for a simple example.) 
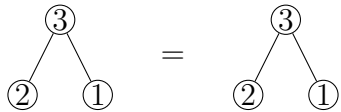
\begin{figure}[htp]
\centering
\begin{tikzpicture}
\begin{scope}[xshift=-1cm]
\node[inner sep =0.5, circle,draw] (1) at (0,0) {1};
\node[inner sep =0.5, circle,draw] (2) at (-1,0) {2};
\node[inner sep =0.5, circle,draw] (3) at (-.5,1) {3};
\draw[-] (3) to (2);
\draw[-] (3) to (1);
\end{scope}
\node at (0,0.5) {$=$};
\begin{scope}[xshift=1cm]
\node[inner sep =0.5, circle,draw] (1) at (0,0) {2};
\node[inner sep =0.5, circle,draw] (2) at (1,0) {1};
\node[inner sep =0.5, circle,draw] (3) at (.5,1) {3};
\draw[-] (3) to (2);
\draw[-] (3) to (1);
\end{scope}
\end{tikzpicture}
\caption{Permuting identical neighboring subtrees does not change the labeling.}
\label{F:different but the same}
\end{figure}

\end{proof}

\section{Decomposition tables}\label{S:Tables}

\subsection{$n=4$}

\begin{align*}
\mc{C}_{0,4} &=V_{(4)} \\ 
\mc{C}_{1,4} &=V_{(4)} \oplus V_{(3,1)}^2 \oplus V_{(2,2)} \oplus V_{(2,1,1)}\\
\mc{C}_{2,4} &= V_{(4)}^3 \oplus V_{(3,1)}^6 \oplus V_{(2,2)}^5 \oplus V_{(2,1,1)}^5 \oplus V_{(1,1,1,1)}^2 \\ 
\mc{C}_{3,4} &= V_{(4)}^4 \oplus V_{(3,1)}^9 \oplus V_{(2,2)}^5 \oplus V_{(2,1,1)}^7 \oplus V_{(1,1,1,1)}^2
\end{align*}

\subsection{$n=5$}

\begin{align*}
\mc{C}_{0,5} &=V_{(5)} \\
\mc{C}_{1,5} &=V_{(5)} \oplus V_{(4,1)}^2 \oplus V_{(3,2)} \oplus V_{(3,1,1)}\\ 
\mc{C}_{2,5} &= V_{(5)}^3 \oplus V_{(4,1)}^7 \oplus V_{(3,2)}^8\oplus V_{(3,1,1)}^6 \oplus V_{(2,2,1)}^6\oplus V_{(2,1,1,1)}^3\oplus V_{(1,1,1,1,1)}\\
\mc{C}_{3,5} &= V_{(5)}^6 \oplus V_{(4,1)}^{20} \oplus V_{(3,2)}^{22}\oplus V_{(3,1,1)}^{25} \oplus V_{(2,2,1)}^{19}\oplus V_{(2,1,1,1)}^{14}\oplus V_{(1,1,1,1,1)}^3\\
\mc{C}_{4,5} &= V_{(5)}^9 \oplus V_{(4,1)}^{26} \oplus V_{(3,2)}^{28}\oplus V_{(3,1,1)}^{30} \oplus V_{(2,2,1)}^{24}\oplus V_{(2,1,1,1)}^{17}\oplus V_{(1,1,1,1,1)}^4
\end{align*}

\subsection{$n=6$}

\begin{align*}
\mc{C}_{0,6} &= V_{(6)} \\
\mc{C}_{1,6} &= V_{(6)} \oplus V_{(5,1)}^{2} \oplus V_{(4,2)} \oplus V_{(4,1^2)}\\
\mc{C}_{2,6} &= V_{(6)}^{3} \oplus V_{(5,1)}^{7} \oplus V_{(4,2)}^{9} \oplus V_{(4,1^2)}^{6} \oplus  V_{(3^2)}^{3} \oplus  V_{(3,2,1)}^{7} \oplus  V_{( 3,1^3)}^{3} \oplus   V_{( 2^3)}^{2} \oplus V_{( 2^2,1^2)} \oplus V_{( 2,1^4)}\\
\mc{C}_{3,6} &= V_{(6)}^{7} \oplus V_{(5,1)}^{23} \oplus V_{(4,2)}^{35} \oplus V_{(4,1^2)}^{33} \oplus  V_{(3^2)}^{19} \oplus  V_{(3,2,1)}^{47} \oplus  V_{( 3,1^3)}^{24} \oplus   V_{( 2^3)}^{14} \oplus V_{( 2^2,1^2)}^{21} \oplus V_{( 2,1^4)}^{9} \oplus V_{(1^6)}^2\\
\mc{C}_{4,6} &= V_{(6)}^{16} \oplus V_{(5,1)}^{59} \oplus V_{(4,2)}^{96} \oplus V_{(4,1^2)}^{96} \oplus  V_{(3^2)}^{46} \oplus  V_{(3,2,1)}^{142} \oplus  V_{( 3,1^3)}^{83} \oplus   V_{( 2^3)}^{43} \oplus V_{( 2^2,1^2)}^{68} \oplus V_{( 2,1^4)}^{36} \oplus V_{(1^6)}^6 \\
\mc{C}_{5,6} &= V_{(6)}^{20} \oplus V_{(5,1)}^{75} \oplus V_{(4,2)}^{114} \oplus V_{(4,1^2)}^{117} \oplus  V_{(3^2)}^{59} \oplus  V_{(3,2,1)}^{170} \oplus  V_{( 3,1^3)}^{96} \oplus   V_{( 2^3)}^{49} \oplus V_{( 2^2,1^2)}^{83} \oplus V_{( 2,1^4)}^{42} \oplus V_{(1^6)}^8
\end{align*}

\bibliography{References.bib}
\bibliographystyle{plain}
\end{document}